\def\bm#1{\mathbbm{#1}}
\def\fn#1{\mathop{{\rm #1}\vphantom{\dim}}}
\def\hs#1{\hspace*{#1ex}}
\def\vds{\hbox{\hbox to 2\arraycolsep{\hss\vbox{\vbox to 1.3ex{ 
\vss\hbox{.}\vspace{.33ex}\hbox{.}\vspace{.33ex}\hbox{.}\vspace{.33ex}\hbox{.}\vspace{.33ex}\hbox{.}\vss}}\hss}}}
\def\hds#1{\hdotsfor[-.1]{#1}}  
\renewcommand{\section}{\@startsection{section}{1}{0mm}{12mm}{5mm}{\raggedright\bf\large}}
\def\@citex[#1]#2{\if@filesw\immediate\write\@auxout{\string\citation{#2}}\fi
  \def\@citea{}\@cite{\@for\@citeb:=#2\do
    {\@citea\def\@citea{\@citesep}\@ifundefined
       {b@\@citeb}{{\bf ?}\@warning
       {Citation `\@citeb' on page \thepage \space undefined}}%
{\csname b@\@citeb\endcsname}}}{#1}}
\def\@citesep{; }
\newtheoremstyle{Kang}{}{}{\itshape}{}{\indent\bf}{}{.6em}{}
\theoremstyle{Kang}
\newtheorem{theorem}{Theorem}[section]
\newtheoremstyle{Kremark}{}{}{}{}{\indent\bf}{}{.6em}{}
\theoremstyle{Kremark}
\newtheorem{defn}[theorem]{Definition}
\numberwithin{equation}{section}
\title{NOETHER'S PROBLEM FOR \\ $\widehat{S_4}$ AND $\widehat{S_5}$}
\author{Ming-chang Kang$^{a,1}$ and Jian Zhou$^{b,1,2}$ \\[3mm]
\begin{minipage}{16cm} \begin{description} \itemsep=-1pt
\item[] $^{a}$Department of Mathematics and Taida Institute of Mathematical\\ Sciences,
National Taiwan University, Taipei
\item[] $^{b}$School of Mathematical Sciences, Peking University, Beijing
\end{description} \end{minipage}}
\date{}
\begin{document}

\maketitle

\footnote{\hspace*{-7.5mm}
2010 Mathematics Subject Classification. Primary 13A50, 14E08, 14M20, 12F12. \\
Keywords: Noether's problem, rationality problem, binary octahedral groups.\\
E-mail addresses: kang@math.ntu.edu.tw, zhjn@math.pku.edu.cn.}
\footnote{\hspace*{-6mm}$^1\,$Both authors were partially supported
by National Center for Theoretic Sciences (Taipei Office).}
\footnote{\hspace*{-6mm}$^2\,$The work of this paper was finished
when the second-named author visited National Taiwan University
under the support by National Center for Theoretic Sciences (Taipei
Office).}

\begin{abstract}
{\bf Abstract.} Let $k$ be a field, $G$ be a finite group and
$k(x_g:g\in G)$ be the rational function field over $k$, on which
$G$ acts by $k$-automorphisms defined by $h\cdot x_g=x_{hg}$ for
any $g,h\in G$. Noether's problem asks whether the fixed subfield
$k(G):=k(x_g:g\in G)^G$ is $k$-rational, i.e.\ purely
transcendental over $k$. If $\widehat{S_n}$ is the double cover of
the symmetric group $S_n$, in which the liftings of transpositions
and products of disjoint transpositions are of order $4$, Serre
shows that $\bm{Q}(\widehat{S_4})$ and $\bm{Q}(\widehat{S_5})$ are
not $\bm{Q}$-rational. We will prove that, if $k$ is a field such
that $\fn{char} k \neq 2, 3$, and $k(\zeta_8)$ is a cyclic
extension of $k$, then $k(\widehat{S_4})$ is $k$-rational. If it
is assumed furthermore that $\fn{char}k=0$, then
$k(\widehat{S_5})$ is also $k$-rational.
\end{abstract}

\section{Introduction}

Let $k$ be a field, and $L$ be a finitely generated field extension of $k$.
$L$ is called $k$-rational (or rational over $k$) if $L$ is purely transcendental over $k$,
i.e.\ $L$ is isomorphic to some rational function field over $k$.
$L$ is called stably $k$-rational if $L(y_1,\ldots,y_m)$ is $k$-rational for some $y_1,\ldots,y_m$
which are algebraically independent over $k$.
$L$ is called $k$-unirational if $L$ is $k$-isomorphic to a subfield of some $k$-rational field extension of $k$.
It is easy to see that ``$k$-rational" $\Rightarrow$ ``stably $k$-rational" $\Rightarrow$ ``$k$-unirational".

A notion of retract rationality was introduced by Saltman (see \cite{Sa,Ka2}).
It is known that, if $k$ is an infinite field,
then ``stably $k$-rational" $\Rightarrow$ ``retract $k$-rational" $\Rightarrow$ ``$k$-unirational".

Let $k$ be a field and $G$ be a finite group. Let $G$ act on the
rational function field $k(x_g:g\in G)$ by $k$-automorphisms
defined by $h\cdot x_g=x_{hg}$ for any $g,h\in G$. Denote by
$k(G)$ the fixed subfield, i.e. $k(G)=k(x_g:g\in G)^G$. Noether's
problem asks, under what situation, the field $k(G)$ is
$k$-rational.

Noether's problem is related to the inverse Galois problem and the
existence of generic $G$-Galois extensions over $k$. For the
details, see Swan's survey paper \cite{Sw}. The purpose of this
paper is to study Noether's problem for some double covers of the
symmetric group $S_n$.

It is known that, when $n\ge 4$, there are four different double
covers of $S_n$, i.e.\ groups $G$ satisfying the short exact
sequence $1\to C_2\to G\to S_n\to 1$ (see, for example, \cite[page
653]{Se}).

\begin{defn}[{\cite[p.58, 90; HH, p.18; Kar, p.177-181]{GMS}}] \label{d1.1}
Let $C_2=\{\pm 1\}$ be the cyclic group of order 2.
When $n\ge 4$, the group $\widehat{S_n}$ is the unique central extension of $S_n$ by $C_2$,
i.e.\ $1\to C_2\to \widehat{S_n} \to S_n\to 1$,
satisfying the condition that the transpositions and the product of two disjoint transpositions in $S_n$
lift to elements of order 4 in $\widehat{S_n}$.
On the other hand, the group $\widetilde{S_n}$ is the central extension $1\to C_2\to \widetilde{S_n} \to S_n \to 1$
such that a transposition in $S_n$ lifts to an element of order 2 of $\widetilde{S_n}$,
but a product of two disjoint transpositions in $S_n$ lifts to an element of order 4.
\end{defn}

Note that we follow the notation of $\widehat{S_n}$ and
$\widetilde{S_n}$ adopted by Serre in \cite{GMS}, which are
different from those in \cite{HH}.

Using cohomological invariants and trace forms over $\bm{Q}$,
Serre was able to prove the following theorem.

\begin{theorem}[Serre {\cite[p.90]{GMS}}] \label{t1.2}
Both $\bm{Q}(\widehat{S_4})$ and $\bm{Q}(\widehat{S_5})$ are not retract $\bm{Q}$-rational.
In particular, they are not $\bm{Q}$-rational.
\end{theorem}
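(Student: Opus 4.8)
The plan is to establish the stronger assertion that $\bm{Q}(\widehat{S_n})$ is not retract $\bm{Q}$-rational for $n=4,5$; since ``$\bm{Q}$-rational'' $\Rightarrow$ ``stably $\bm{Q}$-rational'' $\Rightarrow$ ``retract $\bm{Q}$-rational'', non-$\bm{Q}$-rationality follows. The engine is the theory of cohomological invariants together with Saltman's criterion: if $\bm{Q}(G)$ is retract $\bm{Q}$-rational then there is a generic $G$-Galois extension over $\bm{Q}$, and the versal torsor it provides forces every normalized cohomological invariant of $G$ over $\bm{Q}$ (with finite coefficients) to be unramified. Thus it suffices to exhibit, for $G=\widehat{S_n}$, a single normalized invariant that is nonzero and has a nonzero residue at some discrete valuation; such a ramified invariant cannot exist when $\bm{Q}(G)$ is retract rational, which is the contradiction we seek.

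To produce this invariant I would pass through the trace form. The quotient map $\widehat{S_n}\to S_n$ sends an $\widehat{S_n}$-torsor over a field $F\supseteq\bm{Q}$ to an $S_n$-torsor, i.e.\ to a degree-$n$ \'etale $F$-algebra $E$, which carries the trace form $q_E=\langle\fn{Tr}_{E/F}(x^2)\rangle$ and its Stiefel--Whitney classes $w_1(q_E)\in H^1(F,\bm{Z}/2)$ (the discriminant class) and $w_2(q_E)\in H^2(F,\bm{Z}/2)$ (the Hasse--Witt invariant). The decisive input is Serre's formula for the obstruction to the embedding problem attached to $1\to C_2\to\widehat{S_n}\to S_n\to1$: because transpositions lift to elements of order $4$, his trace-form computation shows that an $S_n$-extension lifts to $\widehat{S_n}$ exactly when $w_2(q_E)=(2)\cup w_1(q_E)$, where $(2)\in H^1(\bm{Q},\bm{Z}/2)=\bm{Q}^\times/\bm{Q}^{\times2}$ is the class of $2$. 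Hence this identity holds for every $\widehat{S_n}$-torsor, and I would take as my invariant $a:=w_2(q_E)=(2)\cup w_1(q_E)\in\fn{Inv}_{\bm{Q}}(\widehat{S_n},\bm{Z}/2)$.

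It then remains to show that $a$ is nonzero and ramified. Since $a$ is the class of the quaternion symbol $(2,d)$ with $d=w_1(q_E)$, its residue at a discrete valuation $v$ of $F$ with residue field $\kappa(v)$ of characteristic $\neq2$ is $\partial_v(a)=[\,\overline2\,]^{\,v(d)}\in\kappa(v)^\times/\kappa(v)^{\times2}$, which is nontrivial as soon as $v(d)$ is odd and $2$ is a nonsquare in $\kappa(v)$. I would therefore realize, over a field such as $\kappa((t))$ with $2$ a nonsquare in $\kappa$, an actual $\widehat{S_n}$-torsor whose underlying degree-$n$ \'etale algebra has discriminant class $[t]$ and Hasse--Witt invariant $(2,t)$; the defining identity is then satisfied, so the lift genuinely exists, while $a$ evaluates to $(2,t)$, whose residue $[\overline2]$ is nonzero. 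The nonvanishing and the explicit realization are precisely the points where the specific structure of $\widehat{S_4}$ (the binary octahedral group) and of $\widehat{S_5}$ enters, through the module structure of $\fn{Inv}_{\bm{Q}}(S_n,\bm{Z}/2)$ for $n=4,5$.

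The main obstacle is the establishment of Serre's trace-form obstruction formula $w_2(q_E)=(2)\cup w_1(q_E)$ for the $\widehat{S_n}$-embedding problem: this is the genuinely deep ingredient, resting on the computation of the Stiefel--Whitney classes of $q_E$ and their relation to the spinor obstruction of the double cover. The secondary difficulty is to verify that the resulting invariant is not merely formally nonzero but is actually realized with a nonvanishing residue for $n=4,5$, i.e.\ to exhibit the required liftable $S_n$-extensions with prescribed discriminant; one must also confirm that the principle ``retract rational $\Rightarrow$ all normalized invariants unramified'' applies over the infinite field $\bm{Q}$.
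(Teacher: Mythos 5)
The paper offers no proof of Theorem \ref{t1.2}: the result is quoted from Serre \cite[p.90]{GMS}, with only the remark that Serre's argument (cohomological invariants and trace forms over $\bm{Q}$) yields the retract-rationality statement. So your sketch is really being measured against Serre's cited proof, whose general apparatus you correctly identify. However, the engine you propose is broken. It is \emph{not} true that retract $\bm{Q}$-rationality of $\bm{Q}(G)$ forces every normalized cohomological invariant of $G$ to be unramified in the sense you use (zero residue at every discrete valuation of every $K\supseteq\bm{Q}$). Already for $G=S_n$, whose Noether field $\bm{Q}(S_n)$ is $\bm{Q}$-rational, the normalized invariant $E\mapsto(2)\cup w_1(q_E)$ has a nonzero residue: evaluate it on $E=K(\sqrt{t})\times K^{n-2}$ over $K=\bm{Q}(t)$ and take the residue at $t=0$, which is the nontrivial square class of $2$ in $\bm{Q}^\times/\bm{Q}^{\times 2}$. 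So exhibiting one ramified invariant proves nothing. What retract rationality actually controls is the unramified cohomology of the field $\bm{Q}(G)$ and specialization/approximation properties of torsors over completions of $\bm{Q}$; Serre's contradiction is correspondingly finer and genuinely uses the arithmetic of $\bm{Q}$ (the behaviour of $2$ and $-1$, i.e.\ the non-cyclicity of $\fn{Gal}(\bm{Q}(\zeta_8)/\bm{Q})$), which is exactly why the present paper emphasizes that Theorem \ref{t1.2} is not known over other ground fields.

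There is a second, equally concrete problem: you have attached the trace-form identity to the wrong cover. Serre's formula gives $w_2(q_E)+(2)\cup(d_E)$ as the obstruction to lifting an $S_n$-algebra along $\widetilde{S_n}\to S_n$ (transpositions of order $2$); the class of the extension $\widehat{S_n}$ differs from that of $\widetilde{S_n}$ by $\varepsilon_n\cup\varepsilon_n=(d_E)\cup(d_E)=(-1)\cup(d_E)$, so the identity satisfied by $\widehat{S_n}$-torsors is $w_2(q_E)=(-2)\cup(d_E)$, not $w_2(q_E)=(2)\cup(d_E)$. (Check $n=2$: for $E=F(\sqrt{a})$ one has $w_2(q_E)=(2)\cup(a)$, the split cover $C_2\times C_2$ has zero obstruction $=w_2(q_E)+(2)\cup(a)$, while the $C_4$-embedding obstruction is $(-1)\cup(a)=w_2(q_E)+(-2)\cup(a)$.) This is not a cosmetic slip: by Plans (Theorem \ref{t1.3}(1) of this paper) $\bm{Q}(\widetilde{S_4})$ \emph{is} $\bm{Q}$-rational, so any argument deriving non-retract-rationality from the mere existence of a ramified invariant of the shape $(2)\cup(d)$ proves too much. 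The entire content of the theorem sits in the difference between the square classes of $2$ and $-2$ over $\bm{Q}$, and your sketch as written is insensitive to it.
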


In \cite[p.89--90]{GMS}, Serre proves that $\fn{Rat}(G/\bm{Q})$ is
false for $G=\widehat{S_4}$ and $\widehat{S_5}$; actually he
proves a bit more. From Serre's proof it is easy to find that
$\bm{Q}(\widehat{S_4})$ and $\bm{Q}(\widehat{S_5})$ are not
retract $\bm{Q}$-rational (see \cite[Section 1]{Ka2} for the
relationship of the property $\fn{Rat}(G/k)$ and the retract
$k$-rationality of $k(G)$). This is the reason why we formulate
Serre's Theorem in the above version. In fact, Theorem \ref{t1.2}
can be perceived also from Serre's own remark in \cite[p.13,
Remark 5.8]{GMS}.

On the other hand, Plans proved the following result.

\begin{theorem}[Plans \cite{Pl1,Pl2}] \label{t1.3}
\rm{(1)} For any field $k$, $k(\widetilde{S_4})$ is $k$-rational.
Thus, if $k$ is a field with $\fn{char}k=0$, $k(\widetilde{S_5})$
is also $k$-rational.

\rm{(2)} For any field $k$ with $\fn{char} k =0$ such that
$\sqrt{-1} \in k$, both $k(\widehat{S_4})$ and $k(\widehat{S_5})$
are $k$-rational.
\end{theorem}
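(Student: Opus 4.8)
The plan is to invoke the no-name lemma (Speiser's descent for linear actions): if a finite group $G$ acts faithfully and $k$-linearly on a finite-dimensional $k$-vector space $V$, then $k(G)=k(V_{\mathrm{reg}})^G$ is rational over $k(V)^G$, where $V_{\mathrm{reg}}$ denotes the regular representation. This holds over any field, so for each of the four groups it reduces the problem to exhibiting one faithful $k$-module $V$ of small dimension for which $k(V)^G$ is $k$-rational. The natural choice is the basic spin module of the double cover, whose dimension is $2^{\lfloor (n-1)/2\rfloor}$, namely $2$ for $n=4$ and $4$ for $n=5$. The field hypotheses are precisely what is needed to realise these modules over $k$: the $2$-dimensional spin module of $\widetilde{S_4}$ is already defined over the prime field, which accounts for part~(1) holding over an arbitrary $k$, whereas the binary octahedral group $\widehat{S_4}\subset\mathrm{SU}(2)$ only acquires a $2$-dimensional $k$-form once $\sqrt{-1}\in k$.

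For the two groups with $n=4$ I would carry out a two-variable computation. Fixing a spin embedding $G\hookrightarrow\mathrm{GL}_2(k)$, the group $G$ acts on $k(x,y)$ and induces, through its image $\bar G\subset\mathrm{PGL}_2(k)$, an action on $\mathbb{P}^1=\mathrm{Proj}\,k[x,y]$; in both cases $\bar G\cong S_4$, the octahedral group. Since $k(x/y)^{\bar G}$ is a subfield of the rational function field $k(x/y)$, L\"uroth's theorem gives $k(x/y)^{\bar G}=k(u)$ for free, over any $k$. The real content is the descent across the central $C_2$: one must adjoin a single further coordinate $w$, built from $y$ together with a $\bar G$-invariant rational function, so that $w$ is genuinely $G$-invariant and $k(x,y)^G=k(u)(w)$. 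Producing such a $w$ amounts to solving a norm (Hilbert~90) equation attached to the extension $1\to C_2\to G\to\bar G\to 1$, and it is exactly here that $\sqrt{-1}\in k$ enters; I expect this central descent, rather than the group theory, to be the crux in dimension two.

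For $n=5$ the spin module is $4$-dimensional and $\fn{char}k=0$ is assumed. I would propagate the $n=4$ result upward using the tower of spin covers and the tensor structure $M_2\otimes M_2$ of the relevant Clifford algebra. Concretely, restricting the $4$-dimensional module to the index-$2$ even subgroup, the binary icosahedral group $\widehat{A_5}\cong\mathrm{SL}_2(\mathbb{F}_5)$, splits it as a sum of the two conjugate $2$-dimensional spin modules, and restricting further to $\widehat{S_4}\subset\widehat{S_5}$ recovers the case already settled. This realises $k(V)^{\widehat{S_5}}$ as a fibration over an invariant field of the $n=4$ type, and the word ``Thus'' in the statement should correspond to exactly this induction: a monomial/semidirect structure on the induced module lets the rationality established for the smaller group be transported to the larger one, the required enlargement of the base field being harmless in characteristic $0$.

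The hardest part, I anticipate, is the field-descent step common to both dimensions: constructing the extra generator as a root of a suitable Hilbert~90 / norm-one equation so that the central $C_2$ is absorbed without leaving a nonrational residue. Serre's Theorem~\ref{t1.2} shows that over $k=\bm{Q}$ this descent must fail, so any correct argument is forced to use the hypotheses $\sqrt{-1}\in k$ (for $\widehat{S_4}$) and $\fn{char}k=0$ (for the passage to $n=5$) in an essential way precisely at this point; a proof that avoided them would contradict the non-rationality of $\bm{Q}(\widehat{S_4})$ and $\bm{Q}(\widehat{S_5})$.
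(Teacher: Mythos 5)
The paper does not prove Theorem \ref{t1.3} at all: it is quoted from Plans \cite{Pl1,Pl2} as a known result, so there is no in-paper argument to match your proposal against. Judged on its own terms, however, your proposal has a gap at its very first step. You assume that the $2$-dimensional spin representation is defined over $k$ under the stated hypotheses, and this is false in exactly the cases the theorem is about. For $\widetilde{S_4}\cong GL(2,3)$ the faithful $2$-dimensional irreducibles have character field $\bm{Q}(\sqrt{-2})$, not the prime field, so part (1) for arbitrary $k$ cannot be reduced to a two-variable computation. For $\widehat{S_4}$ the faithful $2$-dimensional irreducibles have character field $\bm{Q}(\sqrt{2})$ and Schur index $2$; they are realizable over $k(\zeta_8)$ (Formula \eqref{eq3.1}) but not over a field that merely contains $\sqrt{-1}$ --- compare Formula \eqref{eq3.4}, where the minimal faithful $k$-form under the hypothesis $\sqrt{-1}\in k$, $\sqrt{2}\notin k$ is $4$-dimensional. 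So the ``two-variable computation'' you plan for part (2) cannot start. Note also that when the $2$-dimensional form \emph{does} exist (i.e.\ $\zeta_8\in k$), no Hilbert~90/norm-equation descent is needed: writing $z=x/y$, L\"uroth gives $k(z)^{\widehat{S_4}}=k(u)$ and Theorem \ref{t2.1} absorbs the remaining variable, which is precisely the paper's easy Case 1 of the proof of Theorem \ref{t1.4}. The central $C_2$ is not where the difficulty lies; the difficulty is that the representation is not defined over $k$, which forces a $4$-dimensional module plus a lengthy multi-step reduction (Cases 2--4 of Section 4, or Plans's own arguments).

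Two further points. First, your blanket claim that the no-name reduction from $k(V_{\fn{reg}})^G$ to $k(V)^G$ ``holds over any field'' needs care: embedding $V$ as a direct summand of the regular representation uses semisimplicity of $k[G]$, and part (1) explicitly allows $\fn{char}k=2,3$, which divide $|\widetilde{S_4}|=48$. Second, the passage from $n=4$ to $n=5$ is not a formal consequence of restricting the spin module to $\widehat{S_4}$; it is the content of Plans's Theorem 11 (Theorem \ref{t2.5} here), a genuine theorem with its own proof. Citing it would be legitimate, since the whole statement is attributed to Plans, but your sketch of it does not constitute an argument. Your closing sanity check against Serre's Theorem \ref{t1.2} is sound, but the hypothesis $\sqrt{-1}\in k$ enters not through a norm equation for the central extension but through the existence and shape of a faithful $k$-rational module.
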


The main result of this article is the following rationality criterion for $k(\widehat{S_4})$ and $k(\widehat{S_5})$.

\begin{theorem} \label{t1.4}
Let $k$ be a field with $\fn{char}k\ne 2$ or $3$, and $\zeta_8$ be
a primitive $8$-th root of unity in some extension field of $k$.
If $k(\zeta_8)$ is a cyclic extension of $k$, then
$k(\widehat{S_4})$ is $k$-rational; if it is assumed furthermore
that $\fn{char}k=0$, then $k(\widehat{S_5})$ is also $k$-rational.
\end{theorem}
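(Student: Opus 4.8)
The plan is to prove rationality first over the extension field $K=k(\zeta_8)$, where $\widehat{S_4}$ (the binary octahedral group) admits a faithful two-dimensional representation, and then to descend to $k$ using the hypothesis that $\fn{Gal}(K/k)$ is cyclic. Over $K$ the group $\widehat{S_4}$ embeds in $\fn{GL}_2(K)$ via a faithful representation $\rho$ whose matrix entries involve only $\sqrt2=\zeta_8+\zeta_8^{-1}$ and $i=\zeta_8^2$, so that $\rho$ is defined over $K$ and the central involution $-1$ acts as $-\fn{Id}$. Since $\fn{char}\,k\neq 2,3$ we have $\fn{char}\,K\nmid 48=|\widehat{S_4}|$, so Maschke's theorem splits off $\rho$ from the regular representation and the no-name lemma shows that $K(\widehat{S_4})$ is rational over $K(x_1,x_2)^{\widehat{S_4}}$, where $\widehat{S_4}$ acts on $K(x_1,x_2)$ through $\rho$. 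It thus suffices to prove that $K(x_1,x_2)^{\widehat{S_4}}$ is $K$-rational.

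For that I would first pass to invariants under the center $Z=\{\pm1\}$: with $t=x_1/x_2$ and $z=x_2^2$ one has $K(x_1,x_2)^Z=K(t,z)$, a rational function field of transcendence degree $2$. The quotient $\widehat{S_4}/Z\cong S_4$ then acts on $K(t,z)$, on $t$ through the octahedral subgroup of $\fn{PGL}_2(K)$ (fractional-linear transformations of $\bm{P}^1$) and on $z$ by $z\mapsto z\cdot\phi_g(t)$ with automorphy factor $\phi_g(t)=(c_g t+d_g)^2\in K(t)^\times$. Because this $S_4$-action on $\bm{P}^1$ is faithful, $K(t)/K(t)^{S_4}$ is $S_4$-Galois and L\"uroth's theorem gives $K(t)^{S_4}=K(w)$ for a single octahedral invariant $w$. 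The family $(\phi_g)_g$ is a $1$-cocycle in $Z^1(S_4,K(t)^\times)$, hence a coboundary $\phi_g={}^{g}h/h$ by Hilbert's Theorem $90$; then $z_0:=z/h$ is $S_4$-invariant and transcendental over $K(w)$, so $K(t,z)^{S_4}=K(w)(z_0)$ is $K$-rational. This establishes that $K(\widehat{S_4})$ is $K$-rational.

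The remaining and decisive step is the descent from $K$ to $k$. The group $\Gamma=\fn{Gal}(K/k)$ acts on $K(\widehat{S_4})$ through the coefficients, with fixed field $k(\widehat{S_4})$, and I must show that this action, transported to the rational model $K(w,z_0,\dots)$ above, has $k$-rational fixed field. Here cyclicity is indispensable rather than cosmetic: since a cyclic subgroup of $\fn{Gal}(\bm{Q}(\zeta_8)/\bm{Q})\cong C_2\times C_2$ has order at most $2$, the hypothesis forces $\Gamma$ to be generated by a single involution $\sigma$ (or trivial), the three possibilities corresponding to one of $i,\sqrt2,\sqrt{-2}$ lying in $k$ — the case $i\in k$ being exactly Plans' hypothesis in Theorem \ref{t1.3}. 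By contrast, over $k=\bm{Q}$ one has the full, non-cyclic $C_2\times C_2$ and the descent genuinely fails, in accordance with Serre's Theorem \ref{t1.2}. Concretely I would record how $\sigma$ relates the Galois conjugate $\rho^\sigma$ to $\rho$ (the two faithful two-dimensional representations being interchanged precisely by $\sqrt2\mapsto-\sqrt2$) and how it acts on $w$ and $z_0$, then adjust the generators, writing $K=k(\theta)$ with $\sigma\theta=-\theta$, so that $\sigma$ becomes a monomial (after a further Hilbert-$90$ correction for $K/k$, essentially linear) $k$-involution on $\{\theta,w,z_0,\dots\}$; the fixed field of such an involution on a rational function field is $k$-rational. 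Verifying that the descended involution really is of this controlled shape is the main obstacle of the argument.

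For $\widehat{S_5}$, assuming in addition $\fn{char}\,k=0$, I would run the same program using the index-two normal subgroup $\widehat{A_5}\cong\fn{SL}_2(\bm{F}_5)$, the binary icosahedral group, which carries a faithful two-dimensional representation; inducing it yields a faithful four-dimensional representation of $\widehat{S_5}$, after which the no-name reduction, quotient by the center, and analysis of the induced $S_5$-action proceed as before. The one new arithmetic feature is the appearance of $\zeta_5$, but since $(\bm{Z}/5)^\times\cong C_4$ is cyclic, $k(\zeta_5)/k$ is automatically cyclic and poses no obstruction; the only genuine condition remains the cyclicity of $k(\zeta_8)/k$ inherited from the subgroup $\widehat{S_4}\subset\widehat{S_5}$. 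The hypothesis $\fn{char}\,k=0$ is used to ensure $|\widehat{S_5}|=240$ is invertible and to supply the rationality of the intermediate icosahedral quotients, after which the cyclic descent concludes that $k(\widehat{S_5})$ is $k$-rational.
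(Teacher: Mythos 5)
Your reduction over $K=k(\zeta_8)$ is sound and essentially reproduces the paper's Case 1 ($\zeta_8\in k$): the two-dimensional representation, the no-name reduction to $K(x_1,x_2)^{\widehat{S_4}}$, and the L\"uroth/Hilbert-90 computation of the invariant field all work. But the descent from $K$ to $k$, which you yourself flag as ``the main obstacle,'' is where the theorem actually lives, and your plan for it has two concrete defects. First, the Galois group $\Gamma=\fn{Gal}(K/k)$ does not act semilinearly on your model $K(x_1,x_2)$ at all: the representation $\Phi$ of Formula \eqref{eq3.1} is not defined over $k$, and its Galois conjugate under $\sqrt{2}\mapsto-\sqrt{2}$ is the \emph{other} faithful two-dimensional representation (the twist by the sign character), so the plane affording $\Phi$ inside $V_{\fn{reg}}\otimes K$ is not $\Gamma$-stable. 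You cannot ``transport'' $\sigma$ to $K(w,z_0,\dots)$; you must first replace $V$ by a $k$-form, and the smallest faithful one is the four-dimensional Weil restriction constructed in Formulas \eqref{eq3.2}--\eqref{eq3.4}. This is exactly why the paper abandons the two-dimensional model outside Case 1 and works with $k(x_1,x_2,x_3,x_4)^{\widehat{S_4}}=k(\sqrt{-1})(x_1,\dots,x_4)^{\langle a',b,c,\rho\rangle}$. Second, your closing appeal --- that the fixed field of a ``monomial'' involution on a rational function field is automatically $k$-rational --- is false as a general principle: already two- and three-dimensional monomial actions can have non-rational invariant fields (the paper's own reference [Ya] is about negative solutions to the three-dimensional monomial Noether problem). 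In the paper the endgame is a genuinely nontrivial computation (Steps 3--6 of Case 2): after several changes of variables one lands on a quadric relation $V_3^2+V_4^2+2B=0$, and rationality is won only because further substitutions make this relation \emph{linear} in one of the variables ($w_3^2+w_4^2+2+w_1+w_2^2=0$, whence $w_1\in k(w_2,w_3,w_4)$). Nothing in the cyclicity hypothesis alone delivers this; cyclicity only guarantees that one of $\sqrt{-1},\sqrt{2},\sqrt{-2}$ lies in $k$, which is what makes the $k$-form four- rather than eight-dimensional.

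For $\widehat{S_5}$ your proposal diverges from the paper and creates a second, larger gap: the paper simply invokes Plans's theorem (Theorem \ref{t2.5}) that $k(\widehat{S_5})$ is rational over $k(\widehat{S_4})$ when $\fn{char}k=0$, so nothing new is needed. Your alternative --- inducing a two-dimensional representation of $\widehat{A_5}\cong SL_2(\bm{F}_5)$ up to a four-dimensional one of $\widehat{S_5}$ and then ``proceeding as before'' --- would require an entirely new rationality analysis of a four-dimensional linear action of a group of order $240$, which is not a routine repetition of the $\widehat{S_4}$ computation and is not carried out. As written, the proposal establishes only the $K$-rationality of $K(\widehat{S_4})$; the descent and the $\widehat{S_5}$ case remain unproved.
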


When $k$ is a field with $\fn{char}k = p > 0$ and $p \neq 2$, the
assumption that $k(\zeta_8)$ is a cyclic extension of $k$ is
satisfied automatically.

We don't know whether Theorem \ref{t1.2} is valid for fields $k$
other than the field $\bm{Q}$, for examples, some field $k$
satisfying the condition that $k(\zeta_8)$ is not cyclic over $k$.
On the other hand, Serre shows that $\bm{Q}(G)$ is not retract
$\bm{Q}$-rational if $G$ is any one of the groups $\widehat{S_4}$,
$\widehat{S_5}$, $SL_2(\bm{F}_7)$, $SL_2(\bm{F}_9)$ and the
generalized quaternion group of order 16 (see \cite[p.90, Example
33.27]{GMS}). Besides the cases $\widehat{S_4}$ and
$\widehat{S_5}$ studied in Theorem \ref{t1.4}, it is known that
$k(G)$ is $k$-rational provided that $G$ is the generalized
quaternion group of order 16 and $k(\zeta_8)$ is cyclic over $k$
\cite{Ka1}. We don't know whether analogous results as Theorem
\ref{t1.4} are valid when the groups are $SL_2(\bm{F}_7)$ and
$SL_2(\bm{F}_9)$.

\bigskip

The main idea of the proof of Theorem \ref{t1.4} is by applying
the method of Galois descent, namely we enlarge the field $k$ to
$k(\zeta_8)$ first, solve the rationality of
$k(\zeta_8)(\widehat{S_4})$, and then descend the ground field of
$k(\zeta_8)(\widehat{S_4})$ to $k$. This will finish the proof of
the rationality of $k(\widehat{S_4})$. By Plans's Theorem (see
Theorem \ref{t2.5}), $k(\widehat{S_5})$ is a rational extension of
$k(\widehat{S_4})$. Hence $k(\widehat{S_5})$ is $k$-rational also.

In showing that $k(\zeta_8)(\widehat{S_4})$ is
$k(\zeta_8)$-rational and $k(\widehat{S_4})$ is $k$-rational, we
will construct a 4-dimensional faithful representation $V$ of
$\widehat{S_4}$ defined over the field $k$. Although it is not
very difficult to find such a 4-dimensional representation, it
seems the representation and the idea to find it are not
well-known. Once we have this representation, write
$\pi=\fn{Gal}(k(\zeta_8)/k)$. By Theorem \ref{t2.2} of this paper,
it is easy to see that $k(\widehat{S_4})$ is rational over
$k(\zeta_8)(V)^{\langle\widehat{S_4},\pi\rangle}$. Thus it remains
to prove $k(\zeta_8)(V)^{\langle\widehat{S_4},\pi\rangle}$ is
$k$-rational.

The rationality problem of
$k(\zeta_8)(V)^{\langle\widehat{S_4},\pi\rangle}$ is not an easy
job. It requires special efforts and lots of computations. In
several steps we use computers to facilitate the process of
symbolic computation, because computers can save us from the
laborious manual computation. We emphasize computers play only a
minor role in the above sense; we don't use particular codes of
data bases, e.g.\ GAP etc. We will point out that the first
several steps in proving
$k(\zeta_8)(V)^{\langle\widehat{S_4},\pi\rangle}$ is $k$-rational
are rather similar to those in \cite[Section 5]{KZ}. This is not
surprising because we deal with $\widetilde{S_4}$ in \cite[Section
5]{KZ} and the groups $\widehat{S_4}$ and $\widetilde{S_4}$ have a
common subgroup $\widetilde{A_4}$.

We organize this paper as follows. We recall some preliminaries in
Section 2, which will be used in the proof of Theorem \ref{t1.4}.
In Section 3, several low-dimensional faithful representations of
$\widehat{S_4}$ over a field $k$ with $\fn{char}k\ne 2$ will be
constructed (the reader may find another explicit construction in
\cite[p.177-179]{Kar}). Theorem \ref{t1.4} will be proved in
Section 4. In Section 5 we will consider the rationality problem
of $k(G_n)$ (see Definition \ref{d5.1} for the group $G_n$).

Throughout this article,
whenever we write $k(x_1,x_2,x_3,x_4)$ or $k(x,y)$ without explanation,
it is understood that it is a rational function field over $k$.
We will denote $\zeta_8$ (or simply $\zeta$) a primitive 8-th root of unity.

\section{Preliminaries}

We recall several results which will be used in tackling the rationality problem.

\begin{theorem}[Ahmad, Hajja and Kang {\cite[Theorem 3.1]{AHK}}] \label{t2.1}
Let $L$ be any field, $L(x)$ the rational function field of one
variable over $L$ and $G$ a finite group acting on $L(x)$. Suppose
that, for any $\sigma \in G$, $\sigma(L)\subset L$ and
$\sigma(x)=a_\sigma\cdot x+b_\sigma$ where $a_\sigma,b_\sigma\in
L$ and $a_\sigma\ne 0$. Then $L(x)^G=L^G(f)$ for some polynomial
$f\in L[x]$. In fact, if $m=\min\{\deg g(x): g(x)\in L[x]^G, \deg
g(x)\ge 1\}$, any polynomial $f\in L[x]^G$ with $\deg f=m$
satisfies the property $L(x)^G=L^G(f)$.
\end{theorem}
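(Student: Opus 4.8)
The plan is to determine the fixed field by a field-degree computation, after first manufacturing an explicit invariant polynomial of the smallest possible $x$-degree. To set up notation, observe first that, since $G$ is finite and $\sigma(L)\subseteq L$ for every $\sigma$, each $\sigma$ restricts to an automorphism of $L$ (its inverse is a suitable power of $\sigma$). Write $K_1=\ker(G\to\fn{Aut} L)$ and $K=\ker(G\to\fn{Aut} L(x))$, so that $K\subseteq K_1$; by Artin's theorem $[L:L^G]=|G/K_1|$ and $[L(x):L(x)^G]=|G/K|$. Because the action on $x$ is affine with $a_\sigma\neq 0$, it preserves $\deg_x$ and carries $L[x]$ into itself, so $L[x]^G$ is a subring and $\prod_{\sigma\in G}\sigma(x)$ is a $G$-invariant polynomial of degree $|G|\geq 1$; hence the integer $m$ is well defined.

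The first substantive task is to pin down $m$, and I would show $m=|K_1/K|$. For the lower bound, every $g\in L[x]^G$ is in particular $K_1$-invariant; since $K_1/K$ acts faithfully on $L(x)$ by affine transformations fixing $L$ pointwise, the classical theory of affine-transformation groups gives $L[x]^{K_1}=L[N_0]$ with $\deg_x N_0=|K_1/K|$ (the invariant field is generated over $L$ by a single polynomial, of degree equal to the group order). Consequently every nonconstant $G$-invariant has $x$-degree a multiple of $|K_1/K|$, so $m\geq|K_1/K|$.

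The crux, and the step I expect to be the main obstacle, is the matching upper bound: I must exhibit a genuine $G$-invariant of degree exactly $|K_1/K|$. Here the quotient $\bar G=G/K_1\cong\fn{Gal}(L/L^G)$ acts on $L(x)^{K_1}=L(N_0)$, and since $N_0$ has minimal $K_1$-degree and $K_1$ is normal, each $\tau$ sends $N_0$ to another $K_1$-invariant of the same $x$-degree, whence $\tau(N_0)=\alpha_\tau N_0+\beta_\tau$ with $\alpha_\tau\in L^*$ and $\beta_\tau\in L$. Thus $\bar G$ again acts affinely on the one-variable field $L(N_0)$, now \emph{faithfully} on $L$, and in this faithful situation one can solve for an invariant $\gamma N_0+\delta$. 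The relation $\alpha_{\sigma\tau}=\alpha_\sigma\,\sigma(\alpha_\tau)$ shows $\alpha$ is a multiplicative $1$-cocycle, so Hilbert's Theorem 90 yields $\gamma\in L^*$ with $\sigma(\gamma)\alpha_\sigma=\gamma$; feeding this back, the equation for $\delta$ becomes an additive $1$-cocycle condition, solvable because $H^1(\bar G,L^+)=0$ (additive Hilbert 90, i.e.\ the normal basis theorem). The resulting $f_0=\gamma N_0+\delta\in L[x]^G$ has $\deg_x f_0=|K_1/K|$, giving $m\leq|K_1/K|$ and hence $m=|K_1/K|$. This cohomological construction, valid in every characteristic, is the delicate part; everything else is bookkeeping.

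Finally I would close by a degree count that works uniformly for \emph{any} invariant $f$ with $\deg f=m$. Since $f$ is transcendental over $L$ and $f(X)-f$ is irreducible of degree $m$ over $L(f)$, one has $[L(x):L(f)]=m$, while $[L(f):L^G(f)]=[L:L^G]=|G/K_1|$; multiplying, $[L(x):L^G(f)]=m\,|G/K_1|=|K_1/K|\cdot|G/K_1|=|G/K|=[L(x):L(x)^G]$. As $L^G(f)\subseteq L(x)^G\subseteq L(x)$, this forces $[L(x)^G:L^G(f)]=1$, that is $L(x)^G=L^G(f)$, which is exactly the assertion and simultaneously shows that every minimal-degree invariant serves as such an $f$.
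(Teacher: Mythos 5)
The paper quotes this theorem from \cite{AHK} and gives no proof of its own, so there is no internal argument to compare against; I therefore assess your proof on its merits and against the cited source. Your argument is correct and complete, and it takes a genuinely different, more structural route than the elementary one in \cite{AHK}. You reduce everything to the identity $m=|K_1/K|$ (with $K\subseteq K_1$ the kernels of $G\to\fn{Aut}L(x)$ and $G\to\fn{Aut}L$) and close with the degree count $[L(x):L^G(f)]=m\,[L:L^G]=|G/K|=[L(x):L(x)^G]$; the individual ingredients (Artin's theorem, $[L(x):L(f)]=\deg f$ for a nonconstant polynomial $f$, linear disjointness giving $[L(f):L^G(f)]=[L:L^G]$, the cocycle identities $\alpha_{\sigma\tau}=\alpha_\sigma\sigma(\alpha_\tau)$ and $c_{\sigma\tau}=c_\sigma+\sigma(c_\tau)$, and the vanishing of $H^1(\fn{Gal}(L/L^G),L^*)$ and $H^1(\fn{Gal}(L/L^G),L^+)$ in every characteristic) all check out. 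The one point you should not leave entirely to a citation is the claim that $L[x]^{K_1}=L[N_0]$ with $\deg_x N_0=|K_1/K|$: this is exactly the $L=L^G$ case of the theorem being proved, so invoking ``classical theory'' is borderline circular in presentation, though not in substance, since that case is elementary --- the image of $h\mapsto a_h$ is a cyclic group of homotheties of order $d$ prime to $\fn{char}L$, its kernel is the group of translations by a finite additive subgroup $T_0\subseteq L$, and after centering the coordinate at a fixed point of a generating homothety the invariant is $\bigl(\prod_{t\in T_0}(x-t)\bigr)^{d}$, of degree $|T_0|d=|K_1/K|$; one also needs the small remark that $L[x]\cap L(N_0)=L[N_0]$ to pass from the invariant field to the invariant ring. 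Supply that computation and the proof is airtight. By contrast, the published proof in \cite{AHK} avoids cohomology and argues directly with invariant polynomials and ratios thereof; your version is longer but has the bonus of identifying $m$ group-theoretically as $|K_1/K|$.
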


\begin{theorem}[Hajja and Kang {\cite[Theorem 1]{HK}}] \label{t2.2}
Let $G$ be a finite group acting on $L(x_1,\ldots,x_n)$,
the rational function field of $n$ variables over a field $L$.
Suppose that
\begin{enumerate}
\item[{\rm (i)}] for any $\sigma\in G$, $\sigma(L)\subset L$;
\item[{\rm (ii)}] the restriction of the action of $G$ to $L$ is faithful;
\item[{\rm (iii)}] for any $\sigma\in G$,
\[
\begin{pmatrix} \sigma(x_1) \\ \sigma(x_2) \\ \vdots \\ \sigma(x_n) \end{pmatrix}
=A(\sigma)\cdot \begin{pmatrix} x_1 \\ x_2 \\ \vdots \\ x_n \end{pmatrix}+B(\sigma)
\]
where $A(\sigma)\in GL_n(L)$ and $B(\sigma)$ is a $n\times 1$ matrix over $L$.
\end{enumerate}
\end{theorem}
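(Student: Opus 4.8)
The conclusion to be proved is that $L(x_1,\ldots,x_n)^G$ is rational over $L^G$; concretely, that there exist $z_1,\ldots,z_n$, algebraically independent over $L^G$, with $L(x_1,\ldots,x_n)^G=L^G(z_1,\ldots,z_n)$. The plan is to linearize the $G$-action by Galois descent, removing first the linear part $A(\sigma)$ and then the translation part $B(\sigma)$ by two successive applications of Hilbert's Theorem~90. Observe first that hypotheses (i) and (ii) make $G$ act faithfully on $L$ as a group of field automorphisms, so by Artin's theorem $L/L^G$ is a finite Galois extension with Galois group $G$. Throughout, write ${}^{\sigma}\!M$ for the matrix obtained by applying $\sigma$ to the entries of a matrix $M$ over $L$.

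The first step is to record the cocycle identity forced by associativity of the action. Applying $\sigma\tau$ to the column vector $\mathbf{x}$ with entries $x_1,\ldots,x_n$ in two ways and comparing the homogeneous parts yields
\[
A(\sigma\tau)={}^{\sigma}\!A(\tau)\cdot A(\sigma)\qquad(\sigma,\tau\in G),
\]
so $\{A(\sigma)\}$ is a $1$-cocycle for the (non-abelian) action of $G$ on $GL_n(L)$. I would then invoke Speiser's theorem (the matrix form of Hilbert~90, $H^1(G,GL_n(L))=\{1\}$ for the Galois extension $L/L^G$) to conclude that this cocycle is a coboundary: there is $C\in GL_n(L)$ with $A(\sigma)={}^{\sigma}(C^{-1})\cdot C$ for all $\sigma$. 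Setting $\mathbf{y}=C\mathbf{x}$ produces new variables with $L(y_1,\ldots,y_n)=L(x_1,\ldots,x_n)$ on which the homogeneous part becomes trivial, i.e.\ $G$ acts by pure translations $\sigma(y_i)=y_i+b_i(\sigma)$ with $b_i(\sigma)\in L$.

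For the second step, associativity now gives, for each fixed $i$, the additive relation $b_i(\sigma\tau)=b_i(\sigma)+{}^{\sigma}b_i(\tau)$, so $(b_i(\sigma))_{\sigma}$ is a $1$-cocycle in $H^1(G,L_{+})$. Since $H^1(G,L_{+})=0$ by the additive Hilbert~90, each such cocycle is a coboundary, giving $c_i\in L$ with $b_i(\sigma)=c_i-{}^{\sigma}c_i$. Putting $z_i=y_i+c_i$ yields $\sigma(z_i)=z_i$ for every $\sigma\in G$ and every $i$, while $L(z_1,\ldots,z_n)=L(x_1,\ldots,x_n)$ and the $z_i$ remain algebraically independent over $L$. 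Thus $G$ acts only on the ground field $L$ and fixes each transcendental $z_i$, whence $L(x_1,\ldots,x_n)^G=L(z_1,\ldots,z_n)^G=L^G(z_1,\ldots,z_n)$, which is rational over $L^G$.

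The routine parts are the two coboundary computations and the bookkeeping of conventions; the genuine input, and the only place where the Galois hypothesis (i)+(ii) is really used, is Speiser's theorem $H^1(G,GL_n(L))=\{1\}$ used to kill the linear part $A(\sigma)$ — this is the step I would set up most carefully, in particular checking that the exact coboundary convention matches the cocycle identity displayed above. The concluding identification $L(z_1,\ldots,z_n)^G=L^G(z_1,\ldots,z_n)$ is standard once the $z_i$ are $G$-fixed and algebraically independent over $L$.
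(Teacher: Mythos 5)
Your proof is correct, and it is essentially the standard argument: the paper itself quotes this result from Hajja--Kang without reproducing a proof, and the proof in the cited source is exactly this two-stage Galois descent (Artin's theorem to get $L/L^G$ Galois with group $G$, Speiser's theorem $H^1(G,GL_n(L))=\{1\}$ to kill the linear part $A(\sigma)$, then additive Hilbert 90 to kill the translation part $B(\sigma)$). Your cocycle conventions are internally consistent with the computed identity $A(\sigma\tau)={}^{\sigma}\!A(\tau)\cdot A(\sigma)$, and the concluding identification $L(x_1,\ldots,x_n)^G=L^G(z_1,\ldots,z_n)$ follows as you say once the $z_i$ are $G$-fixed generators algebraically independent over $L$.
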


Then there exist elements $z_1,\ldots,z_n\in L(x_1,\ldots,x_n)$ which are algebraically independent over $L$,
and $L(x_1,\ldots,x_n)=L(z_1,\ldots,z_n)$ so that $\sigma(z_i)=z_i$ for any $\sigma \in G$,
any $1\le i\le n$.

\begin{theorem}[Yamasaki \cite{Ya}] \label{t2.3}
Let $k$ be a field with $\fn{char}k\ne 2$, $a\in k\backslash\{0\}$,
$\sigma$ be a $k$-automorphism of the rational function field $k(x,y)$ defined by $\sigma(x)=a/x$, $\sigma(y)=a/y$.
Then $k(x,y)^{\langle \sigma\rangle}=k(u,v)$ where $u=(x-y)/(a-xy)$, $v=(x+y)/(a+xy)$.
\end{theorem}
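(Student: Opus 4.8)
The plan is to prove both inclusions at once by a degree count, after identifying convenient generators of $k(x,y)$ over $k(u,v)$. First I would record the structural facts. A direct substitution gives $\sigma^2=\fn{id}$, so $\langle\sigma\rangle$ is cyclic of order $2$ and, by Artin's theorem, $[k(x,y):k(x,y)^{\langle\sigma\rangle}]=2$. A short computation (clearing denominators as in the verification $\sigma(x-y)=a(y-x)/(xy)$ and $\sigma(a-xy)=a(axy-a^2)/(a\cdot xy)$) confirms $\sigma(u)=u$ and $\sigma(v)=v$, so that $k(u,v)\subseteq k(x,y)^{\langle\sigma\rangle}$. It therefore suffices to prove $[k(x,y):k(u,v)]\le 2$, since then
\[
2=[k(x,y):k(x,y)^{\langle\sigma\rangle}]\le [k(x,y):k(u,v)]\le 2
\]
forces $k(u,v)=k(x,y)^{\langle\sigma\rangle}$.

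The key step is to set $p:=xy$ and to rewrite the defining relations as $x-y=u(a-p)$ and $x+y=v(a+p)$. Since $\fn{char}k\ne 2$, adding and subtracting these recovers $x$ and $y$ as elements of $k(u,v,p)$; conversely $u,v,p\in k(x,y)$, so $k(x,y)=k(u,v,p)$. It then remains only to bound $[k(u,v,p):k(u,v)]$. For this I would feed the identity $(x-y)^2=(x+y)^2-4xy$ through the above substitutions, obtaining $u^2(a-p)^2=v^2(a+p)^2-4p$; expanding yields the relation
\[
(u^2-v^2)\,p^2+\bigl(4-2a(u^2+v^2)\bigr)p+a^2(u^2-v^2)=0
\]
with coefficients in $k(u,v)$. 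Hence $p$ is algebraic of degree at most $2$ over $k(u,v)$, so $[k(x,y):k(u,v)]=[k(u,v,p):k(u,v)]\le 2$, which completes the argument.

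The main obstacle is entirely in the discovery, not the verification: the proof is short precisely because $u$ and $v$ are chosen so that $x-y$ and $x+y$ pick up the factors $a-xy$ and $a+xy$, making $x,y$ rational in $u,v,p$ while $p=xy$ satisfies a manifestly quadratic equation. As a consistency check that the degree bound is sharp rather than accidental, one notes that the product of the two roots of the displayed quadratic equals $a^2(u^2-v^2)/(u^2-v^2)=a^2$, so the roots are $p$ and $a^2/p=(a/x)(a/y)=\sigma(p)$, exactly as Galois theory predicts. Once these identities are in hand the remaining manipulations are routine; the only point requiring care is the hypothesis $\fn{char}k\ne 2$, which is used both when halving to recover $x$ and $y$ and to ensure that $\sigma$ is a genuine involution.
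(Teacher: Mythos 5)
The paper does not prove this statement: it is quoted verbatim from Yamasaki's paper \cite{Ya} as a preliminary tool, so there is no in-paper argument to compare against. Judged on its own, your proof is correct and complete in all essentials. The logic is sound: $\sigma$ is an involution, so Artin gives $[k(x,y):k(x,y)^{\langle\sigma\rangle}]=2$; the verification $\sigma(u)=u$, $\sigma(v)=v$ is right (indeed $\sigma(x-y)=a(y-x)/(xy)$ and $\sigma(a-xy)=a(xy-a)/(xy)$, so the quotient is $u$ again); the identities $x-y=u(a-p)$, $x+y=v(a+p)$ with $p=xy$ do give $k(x,y)=k(u,v,p)$ once $\fn{char}k\ne 2$ lets you halve; and the quadratic $(u^2-v^2)p^2+(4-2a(u^2+v^2))p+a^2(u^2-v^2)=0$ is exactly what $(x-y)^2=(x+y)^2-4xy$ expands to. The one step you should make explicit is that this quadratic is not the zero polynomial over $k(u,v)$ --- otherwise it bounds nothing. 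This is easy: $u^2=v^2$ would force $(x-y)(a+xy)=\pm(x+y)(a-xy)$, i.e. $x^2=a$ or $y^2=a$ in $k(x,y)$, which is absurd, so the leading coefficient $u^2-v^2$ is nonzero and $[k(u,v,p):k(u,v)]\le 2$ as claimed. With that one-line remark inserted, the argument stands, and your closing observation that the second root of the quadratic is $a^2/p=\sigma(p)$ is a nice confirmation that the bound is attained for the expected Galois-theoretic reason.
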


\begin{theorem}[Masuda {\cite[Theorem 3; HoK, Theorem 2.2]{Ma}}] \label{t2.4}
Let $k$ be any field, $\sigma$ be a $k$-automorphism of the
rational function field $k(x,y,z)$ defined by $\sigma: x\mapsto
y\mapsto z \break \mapsto x$. Then
$k(x,y,z)^{\langle\sigma\rangle}=k(s_1,u,v)=k(s_3,u,v)$ where
$s_1$, $s_2$, $s_3$ are the elementary symmetric functions of
degree one, two, three in $x$, $y$, $z$ and $u$ and $v$ are
defined as
\begin{align*}
u &= \frac{x^2y+y^2z+z^2x-3xyz}{x^2+y^2+z^2-xy-yz-zx}, \\
v &= \frac{xy^2+yz^2+zx^2-3xyz}{x^2+y^2+z^2-xy-yz-zx}.
\end{align*}
\end{theorem}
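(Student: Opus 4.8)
The plan is to realize the fixed field through the tower of symmetric functions. Let $S_3$ act on $k(x,y,z)$ by all permutations, so that $\langle\sigma\rangle$ is the alternating subgroup $A_3\subset S_3$, and recall that $k(x,y,z)^{S_3}=k(s_1,s_2,s_3)$ over any field. By Artin's theorem the extension $k(x,y,z)^{\langle\sigma\rangle}\supset k(x,y,z)^{S_3}$ has degree $[S_3:A_3]=2$, valid in every characteristic. My strategy is therefore to prove the single containment $s_2,s_3\in k(s_1,u,v)$; granting this, $k(s_1,u,v)$ sits between $k(s_1,s_2,s_3)$ and $k(x,y,z)^{\langle\sigma\rangle}$, and since that sandwich has degree $2$ it suffices to check that $k(s_1,u,v)$ is not contained in $k(s_1,s_2,s_3)$ in order to conclude $k(s_1,u,v)=k(x,y,z)^{\langle\sigma\rangle}$.

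The two easy ingredients come first. A direct cyclic substitution shows that $\sigma$ fixes $s_1$, the common denominator $D:=x^2+y^2+z^2-xy-yz-zx$, and the two numerators $N_u,N_v$ of $u,v$; hence $s_1,u,v\in k(x,y,z)^{\langle\sigma\rangle}$. A transposition $\tau=(x\,y)$, on the other hand, fixes $D$ but interchanges $N_u$ and $N_v$, so $\tau(u)=v$; since $u-v=(N_u-N_v)/D=-\delta/D$ with $\delta=(x-y)(y-z)(z-x)\ne 0$, the element $u$ is genuinely not $S_3$-invariant. This furnishes the required non-containment $k(s_1,u,v)\not\subseteq k(s_1,s_2,s_3)$ uniformly in the characteristic.

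The heart of the argument is the containment $s_2,s_3\in k(s_1,u,v)$, which I would extract from the fact that $u+v$ and $uv$ are symmetric, hence polynomial expressions in $s_1,s_2,s_3$. Working with the $\mathbb{Z}$-coefficient identities
\begin{align*}
D &= s_1^2-3s_2, & N_u+N_v &= s_1s_2-9s_3,
\end{align*}
one obtains $(u+v)(s_1^2-3s_2)=s_1s_2-9s_3$ together with a second relation $(uv)(s_1^2-3s_2)^2=N_uN_v$, where $N_uN_v$ is the degree-six symmetric polynomial obtained by expanding the product. Viewing these as two equations in the unknowns $s_2,s_3$ with coefficients in $k(s_1,u,v)$, one solves them rationally: when $\fn{char}k\ne 3$ the first equation gives $s_3$ linearly in $s_2$, and substitution into the second determines $s_2$, whence $s_2,s_3\in k(s_1,u,v)$. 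The same two relations, solved instead for $s_1$, yield $s_1\in k(s_3,u,v)$ and therefore the second asserted description $k(s_1,u,v)=k(s_3,u,v)$.

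The main obstacle is the characteristic $3$ case, where $9=0$ collapses the first relation and $D=s_1^2-3s_2$ degenerates to the perfect square $s_1^2$; here the extraction must be redone, but it fortunately simplifies, since $u+v=s_2/s_1$ gives $s_2=s_1(u+v)$ at once and the product relation becomes linear in $s_3$. A secondary nuisance is the bookkeeping needed to expand $N_uN_v$ as a polynomial in $s_1,s_2,s_3$ and to verify that the resulting system has a rational, rather than merely algebraic, solution in each characteristic, including $\fn{char}k=2$, where $u-v=u+v$ forces one to use $uv$ in place of $(u-v)^2$ as the second symmetric invariant. Once $s_2,s_3$ (respectively $s_1$) are exhibited in $k(s_1,u,v)$ (respectively $k(s_3,u,v)$), the tower-and-degree argument of the first paragraph closes the proof with no further computation.
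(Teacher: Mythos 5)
First, a structural note: the paper does not prove Theorem~2.4 at all --- it is quoted as a preliminary from Masuda [Ma, Theorem 3] and Hoshi--Kang [HoK, Theorem 2.2] --- so your proposal can only be measured against those proofs, which rest on exhibiting \emph{explicit} inversion formulas. Your outer architecture is sound and standard: $k(s_1,u,v)$ is sandwiched between $k(s_1,s_2,s_3)=k(x,y,z)^{S_3}$ and the index-$2$ field $k(x,y,z)^{\langle\sigma\rangle}$, and $\tau(u)=v\ne u$ (via $u-v=-(x-y)(y-z)(z-x)/D$) rules out the bottom field, uniformly in the characteristic.

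The genuine gap is in the key containment $s_2,s_3\in k(s_1,u,v)$. Your relations $(u+v)(s_1^2-3s_2)=s_1s_2-9s_3$ and $uv\,(s_1^2-3s_2)^2=N_uN_v=s_2^3+s_1^3s_3-9s_1s_2s_3+27s_3^2$ are correct, but your claim that (for $\fn{char}k\ne 3$) eliminating $s_3$ via the first relation and substituting into the second ``determines $s_2$'' is false as stated: writing $p=u+v$, $q=uv$, the elimination produces the \emph{cubic}
\begin{equation*}
9s_2^3+\bigl(27p^2-9ps_1-6s_1^2-81q\bigr)s_2^2+\bigl(s_1^4+6ps_1^3+(54q-18p^2)s_1^2\bigr)s_2+s_1^4\bigl(3p^2-ps_1-9q\bigr)=0,
\end{equation*}
and a cubic with coefficients in $k(s_1,p,q)$ yields only that $s_2$ is \emph{algebraic} over $k(s_1,u,v)$; worse, if this cubic were irreducible, $s_2$ would have degree $3$ over $k(s_1,p,q)$ and could not lie in the at-most-quadratic extension $k(s_1,u,v)$ at all. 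So the ``rational rather than merely algebraic'' solvability, which you defer as a secondary bookkeeping nuisance, is precisely the mathematical content of the theorem and must be exhibited, not assumed.

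It can be exhibited, because the cubic degenerates: it factors as $9\bigl(s_2-\tfrac{1}{3}s_1^2\bigr)^2\bigl(s_2-(ps_1-3p^2+9q)\bigr)$, the double root being the extraneous locus $D=s_1^2-3s_2=0$ introduced by clearing denominators. The honest root gives the polynomial identities
\begin{equation*}
s_2=s_1(u+v)-3(u^2-uv+v^2), \qquad s_3=s_1uv-(u^3+v^3),
\end{equation*}
which hold with integer coefficients (multiply by $D^2$ and verify in $\bm{Z}[x,y,z]$) and hence in \emph{every} characteristic --- so your case distinctions for $\fn{char}k=2,3$ evaporate entirely; amusingly, the char-$3$ branch you flagged as the main obstacle is the one place where your elimination genuinely works as described, while the gap sits in the generic case. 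The second identity also yields $s_1=(s_3+u^3+v^3)/(uv)\in k(s_3,u,v)$, which is what the asserted equality $k(s_1,u,v)=k(s_3,u,v)$ requires and which your proposal again only asserts. With these identities supplied, your sandwich-and-degree argument closes the proof correctly, and is then essentially the cited proof.
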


\begin{theorem}[Plans {\cite[Theorem 11]{Pl2}}] \label{t2.5}
Let $n\ge 5$ be an odd integer and $k$ be a field with
$\fn{char}k=0$. Then $k(\widehat{S_n})$ is rational over
$k(\widehat{S_{n-1}})$.
\end{theorem}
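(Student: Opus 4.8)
The plan is to reduce both $k(\widehat{S_n})$ and $k(\widehat{S_{n-1}})$ to invariant fields of an explicit faithful representation, and then to transport the rationality across the index-$n$ inclusion $\widehat{S_{n-1}}\subset\widehat{S_n}$ by means of the basic spin representation, whose branching behaviour is exactly what forces the hypothesis that $n$ be odd. Since $\fn{char}k=0$, the regular representation $k[\widehat{S_n}]$ contains every irreducible, in particular the basic spin module $\Delta_n$ of $\widehat{S_n}$, which is faithful (the central involution acts by $-1$, and for $n\ge 5$ the only other proper normal subgroup is not killed). Writing $k[\widehat{S_n}]=\Delta_n\oplus W$ and applying the no-name lemma, $k(\widehat{S_n})=k(k[\widehat{S_n}])^{\widehat{S_n}}$ is rational over $k(\Delta_n)^{\widehat{S_n}}$; carrying out the same reduction for $\widehat{S_{n-1}}$ with a faithful spin module, it suffices to compare the two fixed fields.

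The decisive point, and the place where $n$ odd enters, is the branching rule for basic spin representations. For $n=2m+1$ odd one has $\dim\Delta_n=2^m$, and the restriction splits as $\Delta_n|_{\widehat{S_{n-1}}}=\Delta^+\oplus\Delta^-$, a direct sum of the two associate basic spin modules of $\widehat{S_{n-1}}$, each of dimension $2^{m-1}$ and each faithful. Thus the restriction acquires an extra faithful summand of transcendence degree $2^{m-1}$, which will become the pool of new free variables. (For $n$ even the restriction stays irreducible, so no such summand appears and the argument below has no room to run; this is precisely why the statement is confined to odd $n$.) Taking $\Delta^+$ as the faithful module for $\widehat{S_{n-1}}$, the no-name lemma applied to $\Delta^+\oplus\Delta^-$ shows that $k(\Delta_n)^{\widehat{S_{n-1}}}$ is rational over $k(\Delta^+)^{\widehat{S_{n-1}}}$, over which $k(\widehat{S_{n-1}})$ is in turn rational. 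Everything is therefore organised around the tower
\[
k(\Delta_n)^{\widehat{S_n}}\ \subset\ k(\Delta_n)^{\widehat{S_{n-1}}},
\qquad [\,k(\Delta_n)^{\widehat{S_{n-1}}}:k(\Delta_n)^{\widehat{S_n}}\,]=[\,\widehat{S_n}:\widehat{S_{n-1}}\,]=n,
\]
the equality of degrees coming from Artin's theorem, since $\widehat{S_n}$ acts faithfully on $k(\Delta_n)$.

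The hard part will be the descent across this degree-$n$, non-Galois extension. One must exhibit $k(\Delta_n)^{\widehat{S_n}}$ as $k(\widehat{S_{n-1}})$ with finitely many free variables adjoined, whereas the natural copy of $k(\widehat{S_{n-1}})$ sits inside the \emph{larger} field $k(\Delta_n)^{\widehat{S_{n-1}}}$ and not inside the $\widehat{S_n}$-invariants; so there is no free embedding handed to us, and the rationality must be produced by an explicit change of variables. The extension is governed by the natural action of $S_n$ on the $n$ cosets $\widehat{S_n}/\widehat{S_{n-1}}$, i.e.\ by the standard $n$-point permutation module $P$ (inflated through $\widehat{S_n}\to S_n$), whose compatible restriction is $P|_{\widehat{S_{n-1}}}=P'\oplus(\text{trivial})$.

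My plan for that descent is to enlarge the representation to $\Delta_n\oplus P$, which is still faithful and, by the no-name lemma, still harmless for computing $k(\widehat{S_n})$. The permutation coordinates contribute elementary-symmetric-function invariants, supplying a first block of free variables with transparent $\widehat{S_n}$-invariants, while the splittings $\Delta_n|_{\widehat{S_{n-1}}}=\Delta^+\oplus\Delta^-$ and $P|_{\widehat{S_{n-1}}}=P'\oplus(\text{trivial})$ are used to match the $\widehat{S_n}$-coordinates against the $\widehat{S_{n-1}}$-coordinates. One then linearises step by step: Theorems~\ref{t2.1} and~\ref{t2.2} are invoked repeatedly to peel off the remaining coordinates as free variables over the base once their action has been brought into affine form, the ``$n$-th point'' coordinate together with the $\Delta^-$-summand furnishing exactly the extra transcendence degree separating $k(\widehat{S_n})$ from $k(\widehat{S_{n-1}})$. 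Making this matching explicit—deciding precisely which combinations of the spin coordinates and the distinguished permutation coordinate are $\widehat{S_n}$-invariant and free over $k(\widehat{S_{n-1}})$—is the computational heart of the argument and is where the bulk of the work lies.
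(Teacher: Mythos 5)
First, a point of orientation: the paper does not prove this statement at all. Theorem~\ref{t2.5} sits in the Preliminaries and is quoted verbatim from Plans [Pl2, Theorem~11]; the authors use it as a black box (to pass from $k(\widehat{S_4})$ to $k(\widehat{S_5})$), so there is no in-paper proof to compare yours against. Judged on its own terms, your text is a strategy outline rather than a proof, and the part you defer --- ``making this matching explicit \ldots is the computational heart of the argument'' --- is not a computation to be filled in later but the entire mathematical content of the theorem. The opening reductions are fine and standard (the no-name lemma in the form of Theorem~\ref{t2.2} does let you replace the regular representation by any faithful subrepresentation, and your branching facts are correct: for $n=2m+1$ the unique basic spin module $\Delta_n$ has dimension $2^m$ and restricts to $\Delta^+\oplus\Delta^-$ on $\widehat{S_{n-1}}$, while for $n$ even the restriction stays irreducible). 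But none of this touches the actual difficulty.

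The genuine gap is the descent across the index-$n$ subgroup. Theorems~\ref{t2.1} and~\ref{t2.2}, which you propose to ``invoke repeatedly,'' both have the shape: a group $G$ acts faithfully on a field $L$ and affinely on extra variables, and the conclusion is $L(x_1,\ldots,x_n)^G=L^G(\text{free variables})$. They always terminate at the invariants of the \emph{same} group acting on a smaller field; they contain no mechanism for landing at $L^H$ for a proper subgroup $H<G$. To prove the theorem you must exhibit an embedding of $k(\widehat{S_{n-1}})$ into $k(\widehat{S_n})$ over which the latter is purely transcendental, i.e.\ you must cross from $\widehat{S_n}$-invariants to $\widehat{S_{n-1}}$-invariants, and your proposal supplies no lemma that does this. (Adjoining the permutation module $P$ and observing that $P|_{\widehat{S_{n-1}}}=P'\oplus(\text{trivial})$ is a reasonable instinct --- Plans's actual argument does exploit that $\widehat{S_{n-1}}$ is a point stabilizer of odd index and works with a result about invariant fields of induced modules, of the type ``$k(\mathrm{Ind}_H^G W)^G$ is rational over $k(W)^H$'' --- but such a statement is itself a nontrivial theorem that must be stated and proved, not an application of Theorems~\ref{t2.1}--\ref{t2.2}.) The transcendence-degree bookkeeping you offer in its place does not substitute for this: counting that the ``extra'' coordinates have the right total dimension says nothing about whether the resulting extension is purely transcendental, which is exactly the question at issue. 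As written, the proposal identifies where the oddness of $n$ should enter but does not contain a proof.
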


\begin{theorem}[Kang and Plans {\cite[Theorem 1.9]{KP}}] \label{t2.6}
Let $k$ be any field, $G_1$ and $G_2$ be two finite groups.
If both $k(G_1)$ and $k(G_2)$ are $k$-rational, so is $k(G_1\times G_2)$.
\end{theorem}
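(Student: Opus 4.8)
The plan is to realize the regular representation of $G_1\times G_2$ as a tensor product and to compute the fixed field in two stages, using as the engine the rationality hypotheses on $k(G_1)$ and $k(G_2)$ together with the no-name (linearization) technique behind Theorem \ref{t2.2} and Galois descent. Write $|G_1|=m$, $|G_2|=n$, and index the regular-representation variables as $x_{i\tau}$ with $i\in G_1$, $\tau\in G_2$, so that $k(G_1\times G_2)=k(x_{i\tau})^{G_1\times G_2}$. The two factors act by commuting permutations: $G_1$ permutes the row index $i$ by left translation and fixes $\tau$, while $G_2$ permutes the column index $\tau$ and fixes $i$. For each fixed $i$ the family $\{x_{i\tau}:\tau\in G_2\}$ is a copy of the regular representation of $G_2$, and for each fixed $\tau$ the family $\{x_{i\tau}:i\in G_1\}$ is a copy of the regular representation of $G_1$; thus, as a $G_2$-module, the whole space is $m$ copies of the regular representation of $G_2$, permuted among themselves by $G_1$.

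Stage one is to pass to $G_2$-invariants in a $G_1$-equivariant fashion, the point being to choose the descent so that $G_1$ does not get entangled. The row-sums $s_\tau=\sum_{i}x_{i\tau}$ are fixed by $G_1$ and permuted regularly by $G_2$, so $K_2:=k(s_\tau:\tau\in G_2)$ is purely transcendental over $k$, is fixed pointwise by $G_1$, and is Galois over $k(G_2)=K_2^{G_2}$ with group $G_2$. Since each row spans a copy of the regular representation of $G_2$, Galois descent (Speiser's theorem) applied uniformly to every row produces, for a fixed $k(G_2)$-basis $\{\beta_l\}$ of $K_2$, the $G_2$-invariant coordinates $z_{i,l}=\sum_{\tau}\tau(\beta_l)\,x_{i\tau}$; because the coefficients lie in the $G_1$-fixed field $K_2$, one checks directly that $\sigma(z_{i,l})=z_{\sigma i,l}$ for $\sigma\in G_1$. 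In this way $k(x_{i\tau})^{G_2}=k(G_2)(z_{i,l})$, and the residual $G_1$-action is, for each $l$, again the regular permutation action, now over the rational, $G_1$-fixed base field $K_2$.

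Stage two is to take $G_1$-invariants of $k(G_2)(z_{i,l})^{G_1}=K_2(z_{i,l})^{G_1}$, and this is where the main obstacle lies. The naive plan is to invoke the no-name lemma to reduce the $n$ blocks $(l=1,\dots,n)$ of the regular representation of $G_1$ to a single block, to identify the invariants of one free regular block over $K_2$ with $K_2\cdot k(G_1)$ — which is $K_2$-rational because $k(G_1)$ is $k$-rational — and then to conclude $k$-rationality from the $k$-rationality of $K_2$. The difficulty is twofold. First, the blocks are not free regular copies but are constrained by $\sum_i z_{i,l}=\sum_\tau\tau(\beta_l)s_\tau\in K_2$, so each block is a torsor under the augmentation submodule of the regular representation rather than the full regular representation; second, when $\fn{char}k$ divides $m$ this augmentation module is neither faithful nor a direct summand, so Theorem \ref{t2.2} does not apply verbatim and the torsor need not split. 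When $\fn{char}k\nmid m$ the augmentation module splits off the trivial summand, each block becomes a genuine free regular copy, and the reduction above goes through at once; the delicate point is thus the modular case, where one must treat the affine $G_1$-action over the rational base $K_2$ directly — via the no-name lemma in its general, affine form and the purely transcendental nature of $K_2$ — so as to recover genuine rationality rather than merely stable rationality. Carrying out this affine descent is the technical heart of the proof; once it is in place, the fixed field is exhibited as $K_2\cdot k(G_1)$ together with finitely many independent transcendentals, and the $k$-rationality of $k(G_1\times G_2)$ follows from that of $k(G_1)$ and $k(G_2)$.
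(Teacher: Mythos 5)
You should first note that the paper contains no proof of this statement at all: Theorem \ref{t2.6} is quoted verbatim from Kang and Plans \cite[Theorem 1.9]{KP}, so the only benchmark is whether your argument is itself complete. It is not, and you say so yourself. Your Stage one is correct: $K_2=k(s_\tau)$ is $G_1$-fixed and $G_2$-regular, the matrix $\bigl(\tau(\beta_l)\bigr)$ is invertible by Dedekind's independence of automorphisms, the $z_{i,l}$ are $G_2$-invariant and $G_1$-equivariant, and a transcendence count confirms $k(x_{i\tau})^{G_2}=k(G_2)(z_{i,l})$ with exactly the $n$ relations $\sum_i z_{i,l}=c_l\in k(G_2)$. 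But the theorem is asserted for \emph{any} field, and your Stage two defers precisely the modular case $\fn{char}k\mid |G_1|$ to an unexecuted ``affine descent \dots\ via the no-name lemma in its general, affine form.'' No such lemma is available to cite: Theorem \ref{t2.2} requires the group to act \emph{faithfully} on the coefficient field, and in your situation $G_1$ fixes $k(G_2)$ pointwise, which is exactly why it does not apply verbatim; declaring this step ``the technical heart'' and leaving it to be ``carried out'' is a genuine gap, not a proof. (A smaller inaccuracy: in the tame case $\fn{char}k\nmid m$, translating by $c_l/m$ turns each block into a copy of the \emph{augmentation} module, not ``a genuine free regular copy''; one must still borrow an invariant variable, via Theorem \ref{t2.1}, to compare with the regular representation.)

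The gap is real but the obstacle you built is avoidable, because your torsors are visibly trivial in every characteristic: they carry a rational point. After replacing $\beta_l$ by $\beta_l+\beta_{l'}$ where needed (the vector $(c_l)_l$ is an invertible $K_2$-linear image of $(s_\tau)_\tau$, so one can arrange every $c_l\ne 0$ while keeping a basis), set $z'_{i,l}=z_{i,l}/c_l$; then each block satisfies $\sum_i z'_{i,l}=1$ and $G_1$ permutes it regularly, with no affine part and no characteristic hypothesis. Now write $k(G_2)=k(t_1,\ldots,t_n)$ with trivial $G_1$-action (the $t_j$ are algebraically independent over $k(z'_{i,l})$ by counting), and apply Theorem \ref{t2.2} with $L=k(G_2)(z'_{i,1}:i\in G_1)$, on which $G_1$ does act faithfully, to absorb blocks $2,\ldots,n$ into $G_1$-invariant transcendentals; the translation constants lie in $k\subset L$, so the hypotheses hold. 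This leaves $L^{G_1}=k(z'_{i,1})^{G_1}(t_1,\ldots,t_n)$, and $k(z'_{i,1})^{G_1}(t_1)\cong k(w_i:i\in G_1)^{G_1}=k(G_1)$, because the free regular field $k(w_i)$ equals the sum-one field $k(w_i/w_0)$ with the single invariant variable $w_0=\sum_i w_i$ adjoined, and invariant variables split off (Theorem \ref{t2.1}). The fixed field is therefore $k(G_1)(t_2,\ldots,t_n)$ up to invariant transcendentals, which is $k$-rational by the hypotheses on $k(G_1)$ and $k(G_2)$. In short: your decomposition works in all characteristics once each block is normalized by its invariant sum, but as written the proposal stops exactly where the proof is needed.
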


\section{Faithful representations of \boldmath{$\widehat{S_4}$}}

In this section and the next section, the field $k$ we consider is
of $\fn{char}k\ne 2$ or $3$. We will denote by
$\zeta_8=(1+\sqrt{-1})/\sqrt{2}$, a primitive 8-th root of unity.

In \cite[p.92]{Sp} a generating set of $\widehat{S_4}$ is given
(where the group is called the binary octahedral group) :
$\widehat{S_4}=\langle a',b,c\rangle$ with relations
${a'}^8=b^4=c^6=1$, $ba'b^{-1}={a'}^{-1}$, $cbc^{-1}={a'}^2$,
$(a'c)^2=-{a'}^2b$ (here $-1$ is the element which is equal to
${a'}^4=b^2=c^3$). Note that we have a short exact sequence of
groups
\[
1\to \{\pm1\} \to \widehat{S_4} \stackrel{p}{\to} S_4\to 1
\]
and $p(a')=(1,2,3,4)$, $p(b)=(1,4)(2,3)$, $p(c)=(1,2,3)$.
Note that $p(ba')=(1,4)(2,3)(1,2,3,4)=(1,3)$.

If $\zeta_8\in k$,
a faithful 2-dimensional representation $\Phi:\widehat{S_4}\to GL_2(k)$ is given in \cite[p.92]{Sp} as follows (we write $\zeta=\zeta_8$),
\begin{gather}
\Phi(a')= \begin{pmatrix} \zeta & 0 \\ 0 & \zeta^7 \end{pmatrix}, \quad
\Phi(b) = \begin{pmatrix} 0 & \sqrt{-1} \\ \sqrt{-1} & 0 \end{pmatrix}, \quad
\Phi(c) = \frac{1}{\sqrt{2}}\begin{pmatrix} \zeta^7 & \zeta^7 \\ \zeta^5 & \zeta \end{pmatrix}. \label{eq3.1}
\end{gather}

Suppose that $\sqrt{2}\in k$ (but it is unnecessary that $\sqrt{-1}\in k$).
We may obtain a 4-dimensional representation $\widehat{S_4}\to GL_4(k)$ by substituting
\[
\begin{pmatrix} 0 & -1 \\ 1 & 0 \end{pmatrix}, \quad \begin{pmatrix} \alpha & 0 \\ 0 & \alpha \end{pmatrix}
\]
for $\sqrt{-1}$, $\alpha$ (where $k_0$ is the prime field of $k$
and $\alpha\in k_0(\sqrt{2})$) in Formula \eqref{eq3.1}. This
process is just an easy application of Weil's restriction
\cite[p.38]{We,Vo}. Thus we get
\begin{equation} \label{eq3.2}
\begin{aligned}
a' &\mapsto \frac{1}{\sqrt{2}} \left(\hs{-1}\begin{array}{cc@{\vds}cc}
1 & -1 & & \\ 1 & 1 & & \\[-3\jot] \hds{4} \\[-1\jot] & & 1 & 1 \\ & & -1 & 1 \end{array}\hs{-1}\right), & \quad
b &\mapsto \begin{pmatrix} & & & -1 \\ & & 1 & \\ & -1 & & \\ 1 & & & \end{pmatrix}, \\
c &\mapsto \frac{1}{2}\left(\begin{array}{@{}rrrr@{}} 1 & 1 & 1 & 1 \\ -1 & 1 & -1 & 1 \\ -1 & 1 & 1 & -1 \\ -1 & -1 & 1 & 1 \end{array}\right).
\end{aligned}
\end{equation}

Similarly, when $\sqrt{-2}\in k$ (but it may happen that $\sqrt{-1}\notin k$),
write $\sqrt{-2}=\sqrt{-1}\cdot \sqrt{2}$.
Thus represent $\sqrt{2}$ as $-\sqrt{-1}\cdot \sqrt{-2}$ and $\zeta=(1+\sqrt{-1})/\sqrt{2}$ becomes $\sqrt{-2}(1-\sqrt{-1})/2$.
Substitute
\[
\begin{pmatrix} 0 & -1 \\ 1 & 0 \end{pmatrix}, \quad \begin{pmatrix} \alpha & 0 \\ 0 & \alpha \end{pmatrix}
\]
for $\sqrt{-1}$, $\alpha$ (where $k_0$ is the prime field of $k$
and $\alpha\in k_0(\sqrt{-2})$) in Formula \eqref{eq3.1}. We get
\begin{equation} \label{eq3.3}
\begin{aligned}
a' &\mapsto \frac{\sqrt{-2}}{2} \left(\hs{-1}\begin{array}{cc@{\vds}cc}
1 & 1 & & \\ -1 & 1 & & \\[-3\jot] \hds{4} \\[-1\jot] & & -1 & 1 \\ & & -1 & -1 \end{array}\hs{-1}\right), & \quad
b &\mapsto \begin{pmatrix} & & & -1 \\ & & 1 & \\ & -1 & & \\ 1 & & & \end{pmatrix}, \\
c &\mapsto \frac{1}{2}\left(\begin{array}{@{}rrrr@{}} 1 & 1 & 1 & 1 \\ -1 & 1 & -1 & 1 \\ -1 & 1 & 1 & -1 \\ -1 & -1 & 1 & 1 \end{array}\right).
\end{aligned}
\end{equation}

By the same way, if $\sqrt{-1}\in k$ (but it may happen that $\sqrt{2}\notin k$), substitute
\[
\begin{pmatrix} 0 & 2 \\ 1 & 0 \end{pmatrix}, \quad \begin{pmatrix} \alpha & 0 \\ 0 & \alpha \end{pmatrix}
\]
for $\sqrt{2}$, $\alpha$ (where $k_0$ is the prime filed of $k$
and $\alpha\in k_0(\sqrt{-1})$) in Formula \eqref{eq3.1}. We get
\begin{align}
a' &\mapsto \left(\hs{-1}\begin{array}{cc@{\vds}cc}
0 & 1+\sqrt{-1} & & \\ (1+\sqrt{-1})/2 & 0 & & \\[-3\jot] \hds{4} \\[-1\jot] & & 0 & 1-\sqrt{-1} \\ & & (1-\sqrt{-1})/2 & 0
\end{array}\hs{-1}\right),  \notag \\
b &\mapsto \left(\hs{-1}\begin{array}{cc@{\vds}cc}
& & \sqrt{-1} & 0 \\ & & 0 & \sqrt{-1} \\[-3\jot] \hds{4} \\[-1\jot] \sqrt{-1} & 0 & & \\ 0 & \sqrt{-1} & &
\end{array}\hs{-1}\right), \label{eq3.4} \\
c &\mapsto \begin{pmatrix}  (1-\sqrt{-1})/2 & 0 & (1-\sqrt{-1})/2 & 0 \\ 0 & (1-\sqrt{-1})/2 & 0 & (1-\sqrt{-1})/2 \\
(-1-\sqrt{-1})/2 & 0 & (1+\sqrt{-1})/2 & 0 \\ 0 & (-1-\sqrt{-1})/2 & 0 & (1+\sqrt{-1})/2  \end{pmatrix}. \notag
\end{align}

Finally, from Formula \eqref{eq3.2} we may get a faithful 8-dimensional representation of $\widehat{S_4}$ into $GL_8(k_0)$
where $k_0$ is the prime field of $k$.
Explicitly, substitute
\[
\begin{pmatrix} 0 & 2 \\ 1 & 0 \end{pmatrix}, \quad \begin{pmatrix} \alpha & 0 \\ 0 & \alpha \end{pmatrix}
\]
for $\sqrt{2}$, $\alpha$ where $\alpha\in k_0$ in Formula
\eqref{eq3.2}. We get
\begin{align}
a' &\mapsto \frac{1}{2} \left(\hs{-1}\begin{array}{cccc@{\vds}cccc}
0 & 2 & 0 & -2 & & & & \\
1 & 0 & -1 & 0 & & & & \\
0 & 2 & 0 & 2 & & & & \\
1 & 0 & 1 & 0 & & & & \\[-3\jot] \hds{8} \\[-1\jot]
& & & & 0 & 2 & 0 & 2 \\
& & & & 1 & 0 & 1 & 0 \\
& & & & 0 & -2 & 0 & 2 \\
& & & & -1 & 0 & 1 & 0
\end{array}\hs{-1}\right), \notag \\
b &\mapsto \left(\hs{-1}\begin{array}{cccc@{\vds}cccc}
& & & & & & -1 & 0\\
& & & & & & 0 & -1\\
& & & & 1 & 0 & & \\
& & & & 0 & 1 & & \\[-3\jot] \hds{8} \\[-1\jot]
& & -1 & 0 & & & & \\
& & 0 & -1 & & & & \\
1 & 0 & & & & & & \\
0 & 1 & & & & & &
\end{array}\hs{-1}\right), \label{eq3.5} \\
c &\mapsto \frac{1}{2} \left(\hs{-1}\begin{array}{cccc@{\vds}cccc}
1 & 0 & 1 & 0 & 1 & 0 & 1 & 0 \\
0 & 1 & 0 & 1 & 0 & 1 & 0 & 1 \\
-1 & 0 & 1 & 0 & -1 & 0 & 1 & 0 \\
0 & -1 & 0 & 1 & 0 & -1 & 0 & 1 \\[-3\jot] \hds{8} \\[-1\jot]
-1 & 0 & 1 & 0 & 1 & 0 & -1 & 0 \\
0 & -1 & 0 & 1 & 0 & 1 & 0 & -1 \\
-1 & 0 & -1 & 0 & 1 & 0 & 1 & 0 \\
0 & -1 & 0 & -1 & 0 & 1 & 0 & 1
\end{array}\hs{-1}\right). \notag
\end{align}

%
%

\section{Proof of Theorem \ref{t1.4}}

By Theorem \ref{t2.5}, in case $\fn{char}k=0$ and it is known that
$k(\widehat{S_4})$ is $k$-rational, it follows immediate that
$k(\widehat{S_5})$ is also $k$-rational. Hence, in proving Theorem
\ref{t1.4}, it suffices to prove the rationality of
$k(\widehat{S_4})$.

By assumption, $k(\zeta_8)$ is a cyclic extension of $k$.
Hence at least one of $\sqrt{-1}$, $\sqrt{2}$, $\sqrt{-2}$ belongs to $k$.

\bigskip

\textit{Case} 1. $\zeta_8\in k$.

Since $\fn{char} k \neq 2$ or $3$, the group algebra
$k[\widehat{S_4}]$ is semi-simple. Hence the 2-dimensional
faithful representation provided by Formula \eqref{eq3.1} can be
embedded into the regular representation whose dual space is
$V_{\fn{reg}}=\oplus_{g\in \widehat{S_4}} k\cdot x(g)$ where
$\widehat{S_4}$ acts on $V_{\fn{reg}}$ by $h\cdot x(g)=x(hg)$ for
any $g,h\in \widehat{S_4}$. Applying Theorem \ref{t2.2}, we find
$k(\widehat{S_4})=k(x(g):\break g\in
\widehat{S_4})^{\widehat{S_4}}$ is rational over
$k(x,y)^{\widehat{S_4}}$ where the actions given by Formula
\eqref{eq3.1} are as follows

\begin{align*}
a' :{}& x\mapsto \zeta x,~ y\mapsto \zeta^7 y, \\
b :{}& x\mapsto \sqrt{-1}y,~ y\mapsto \sqrt{-1}x, \\
c :{}& x\mapsto (\zeta^7 x+\zeta^5 y)/\sqrt{2},~ y\mapsto (\zeta^7 x+\zeta y)/\sqrt{2}.
\end{align*}

Define $z=x/y$. Then $k(x,y)=k(z,x)$.
Apply Theorem \ref{t2.1}.
We get $k(z,x)^{\widehat{S_4}}=k(z)^{\widehat{S_4}}(t)$ for some element $t$ fixed by $\widehat{S_4}$.
The field $k(z)^{\widehat{S_4}}$ is $k$-rational by L\"uroth's Theorem.
Hence $k(z,x)^{\widehat{S_4}}$ and $k(\widehat{S_4})$ are $k$-rational.

\bigskip

\textit{Case} 2. $\sqrt{2}\in k$, but $\sqrt{-1}\notin k$.

We will use the 4-dimensional faithful representation of $\widehat{S_4}$ over $k$ provided by Formula \eqref{eq3.2}.
This representation provides an action of $\widehat{S_4}$ on $k(x_1,x_2,x_3,x_4)$ given by
\begin{equation} \label{eq4.1}
\begin{aligned}
a':{}& x_1\mapsto (x_1+x_2)/\sqrt{2},~ x_2\mapsto (-x_1+x_2)/\sqrt{2},~ x_3\mapsto (x_3-x_4)/\sqrt{2},\\
& x_4\mapsto (x_3+x_4)/\sqrt{2}, \\
b :{}& x_1\mapsto x_4 \mapsto -x_1,~ x_2\mapsto -x_3,~ x_3\mapsto x_2, \\
c :{}& x_1\mapsto (x_1-x_2-x_3-x_4)/2,~ x_2\mapsto (x_1+x_2+x_3-x_4)/2, \\
& x_3\mapsto (x_1-x_2+x_3+x_4)/2,~ x_4\mapsto (x_1+x_2-x_3+x_4)/2.
\end{aligned}
\end{equation}

\bigskip

Step 1.
Apply Theorem \ref{t2.2} and use the same arguments in Case 1.
We find that $k(\widehat{S_4})$ is rational over $k(x_1,x_2,x_3,x_4)^{\widehat{S_4}}$.
It remains to show that $k(x_1,x_2,x_3,x_4)^{\widehat{S_4}}$ is $k$-rational.

\bigskip

Step 2.
Write $\pi=\fn{Gal}(k(\sqrt{-1})/k)=\langle \rho \rangle$ where $\rho(\sqrt{-1})=-\sqrt{-1}$.

We extend the actions of $\pi$ and $\widehat{S_4}$ on $k(\sqrt{-1})$ and $k(x_1,x_2,x_3,x_4)$ to
$k(\sqrt{-1})\break (x_1,x_2,x_3,x_4)$ by requiring that $\rho(x_i)=x_i$ for $1\le i\le 4$ and $g(\sqrt{-1})=\sqrt{-1}$
for all $g\in \widehat{S_4}$.
It follows that $k(x_1,x_2,x_3,x_4)^{\widehat{S_4}}=\{ k(\sqrt{-1})(x_1,x_2,x_3,x_4)^{\langle\rho\rangle}\}^{\langle a',b,c\rangle}
=k(\sqrt{-1})(x_1,x_2,x_3,x_4)^{\langle a',b,c,\rho\rangle}$.

Define $y_1,y_2,y_3,y_4\in k(\sqrt{-1})(x_1,x_2,x_3,x_4)$ by
\begin{align*}
y_1 &= \sqrt{-1} x_1+\sqrt{-1} x_2-x_3+x_4, & y_2 &= -\sqrt{-1}x_1+\sqrt{-1}x_2+x_3+x_4, \\
y_3 &= x_1-x_2-\sqrt{-1} x_3-\sqrt{-1} x_4, & y_4 &= x_1+x_2-\sqrt{-1}x_3+\sqrt{-1}x_4.
\end{align*}

Then $k(\sqrt{-1})(x_1,x_2,x_3,x_4)=k(\sqrt{-1})(y_1,y_2,y_3,y_4)$ and the actions in Formula \eqref{eq4.1} becomes
\begin{equation} \label{eq4.2}
\begin{aligned}
a':{}& y_1\mapsto (y_1+y_2)/\sqrt{2},~ y_2\mapsto (-y_1+y_2)/\sqrt{2},~ y_3\mapsto (y_3+y_4)/\sqrt{2},\\
& y_4\mapsto (-y_3+y_4)/\sqrt{2}, \\
b:{}& y_1 \mapsto \sqrt{-1} y_1,~ y_2\mapsto -\sqrt{-1}y_2,~ y_3\mapsto \sqrt{-1}y_3,~ y_4\mapsto -\sqrt{-1}y_4, \\
c:{}& y_1\mapsto (y_1-\sqrt{-1}y_2)/(1+\sqrt{-1}),~y_2\mapsto (y_1+\sqrt{-1}y_2)/(1+\sqrt{-1}), \\
& y_3\mapsto (y_3-\sqrt{-1}y_4)/(1+\sqrt{-1}),~ y_4\mapsto (y_3+\sqrt{-1}y_4)/(1+\sqrt{-1}), \\
\rho:{}& y_1\mapsto -\sqrt{-1} y_4,~ y_2\mapsto \sqrt{-1} y_3,~ y_3\mapsto \sqrt{-1}y_2,~ y_4\mapsto -\sqrt{-1}y_1.
\end{aligned}
\end{equation}

Note that the action of ${a'}^2$ is given by
\[
{a'}^2: y_1\mapsto y_2\mapsto -y_1,~ y_3\mapsto y_4 \mapsto -y_3.
\]

It may be interesting if the reader is willing to compare the
actions in Formula \eqref{eq4.2} with those in \cite[Section
4]{KZ}. It turns out that the formulae for $b$, ${a'}^2$, $c^2$
are completely the same as those for $\lambda_1$, $\lambda_2$,
$\sigma$ in \cite[Formula (4.3)]{KZ}. As mentioned before, both
the subgroups $\langle b,{a'}^2,c^2\rangle$ and $\langle
\lambda_1,\lambda_2,\sigma\rangle$ are isomorphic to
$\widetilde{A_4}$ ($\widetilde{A_4}=p^{-1}(A_4)$ in the notation
of Section 3) as abstract groups.

\bigskip

Step 3.
Before we find $k(\sqrt{-1})(y_1,y_2,y_3,y_4)^{\langle\widehat{S_4},\pi\rangle}$,
we will find $k(\sqrt{-1})(y_1,y_2,y_3,\break y_4)^{\langle b,{a'}^2\rangle}$ first.
The method is the same as Step 3 and Step 4 in \cite[Section 4]{KZ}.
We will write down the details,
for the convenience of the reader.

Define $z_1=y_1/y_2$, $z_2=y_3/y_4$, $z_3=y_1/y_3$.
By Theorem \ref{t2.1}, we find that $k(\sqrt{-1})\break (y_1,y_2,y_3,y_4)^{\langle\widehat{S_4},\pi\rangle}
=k(\sqrt{-1})(z_1,z_2,z_3)(y_4)^{\langle\widehat{S_4},\pi\rangle}=k(\sqrt{-1})(z_1,z_2,z_3)^{\langle\widehat{S_4},\pi\rangle}(z_0)$
where $z_0$ is fixed by the actions of $\widehat{S_4}$ and $\pi$.
It remains to show that $k(\sqrt{-1})(z_1,z_2,z_3)^{\langle\widehat{S_4},\pi\rangle}$ is $k$-rational.

Define $u_1=z_1/z_2$, $u_2=z_1z_2$, $u_3=z_3$.
Then $k(\sqrt{-1})(z_1,z_2,z_3)^{\langle b\rangle}=k(\sqrt{-1})(u_1,u_2,\break u_3)$.
The action of ${a'}^2$ is given by
\[
{a'}^2: u_1 \mapsto 1/u_1,~ u_2\mapsto 1/u_2, ~ u_3\mapsto u_3/u_1.
\]

Define $v_1=(u_1-u_2)/(1-u_1u_2)$, $v_2=(u_1+u_2)/(1+u_1u_2)$, $v_3=u_3(1+(1/u_1))$.
Then $k(\sqrt{-1})(u_1,u_2,u_3)^{\langle {a'}^2\rangle}=k(\sqrt{-1})(u_1,u_2,v_3)^{\langle{a'}^2\rangle}=k(\sqrt{-1})(v_1,v_2,v_3)$
by Theorem \ref{t2.3} (note that ${a'}^2(v_3)=v_3$).
In summary, $k(\sqrt{-1})(z_1,z_2,z_3)^{\langle b,{a'}^2\rangle}=k(\sqrt{-1})(v_1,v_2,v_3)$.

\bigskip

Step 4.
The action of $c$ on $v_1$, $v_2$, $v_3$ is given by
\[
c: v_1 \mapsto 1/v_2,~ v_2 \mapsto v_1/v_2, ~ v_3\mapsto
v_3(v_1+v_2)/[v_2(1+v_1)].
\]

Define $X_3=v_3(1+v_1+v_2)/[(1+v_1)(1+v_2)]$.
Then $c(X_3)=X_3$ and $k(\sqrt{-1})(v_1,v_2,v_3)\break =k(\sqrt{-1})(v_1,v_2,X_3)$.
Thus we may apply Theorem \ref{t2.4} (regarding $v_1$, $1/v_2$, $v_2/v_1$ as $x$, $y$, $z$ in Theorem \ref{t2.4}).
More precisely, define
\begin{align*}
X_1 &= (v_1^3v_2^3+v_1^3+v_2^3-3v_1^2v_2^2)/(v_1^4v_2^2+v_2^4+v_1^2-v_1^2v_2^3-v_1v_2^2-v_1^3v_2), \\
X_2 &= (v_1v_2^4+v_1v_2+v_1^4v_2-3v_1^2v_2^2)/(v_1^4v_2^2+v_2^4+v_1^2-v_1^2v_2^3-v_1v_2^2-v_1^3v_2).
\end{align*}

By Theorem \ref{t2.4} we get $k(\sqrt{-1})(v_1,v_2,X_3)^{\langle c\rangle}=k(\sqrt{-1})(X_1,X_2,X_3)$.

\bigskip

Step 5.
With the aid of computers, the actions of $a'$ and $\rho$ on $X_1$, $X_2$, $X_3$ are given by
\begin{align*}
a':{}& X_1 \mapsto X_1/(X_1^2-X_1X_2+X_2^2),~ X_2\mapsto X_2/(X_1^2-X_1X_2+X_2^2),~ X_3\mapsto X_3, \\
\rho:{}& X_1\mapsto X_2/(X_1^2-X_1X_2+X_2^2),~ X_2\mapsto X_1/(X_1^2-X_1X_2+X_2^2),~ X_3\mapsto -2A/X_3
\end{align*}
where $A=g_1g_2g_3^{-1}$ and
\begin{align*}
g_1 &= (1+X_1)^2-X_2(1+X_1)+X_2^2, \quad g_2 = (1+X_2)^2-X_1(1+X_2)+X_1^2, \\
g_3 &= 1+X_1+X_2+X_1^3+X_2^3+X_1X_2(3X_1X_2-2X_1^2-2X_2^2+2)+X_1^4+X_2^4.
\end{align*}

Note that $\rho(g_1)=g_2/(X_1^2-X_1X_2+X_2^2)$.

Define $Y_1=X_1/X_2$, $Y_2=X_1$, $Y_3=X_1X_3/g_1$.
We find that
\[
a': Y_1\mapsto Y_1,~ Y_2\mapsto Y_1^2/(Y_2(1-Y_1+Y_1^2)),~ Y_3\mapsto Y_3.
\]

Thus $k(\sqrt{-1})(X_1,X_2,X_3)^{\langle a' \rangle}=k(\sqrt{-1})(Y_1,Y_2,Y_3)^{\langle a'\rangle}=k(\sqrt{-1})(Z_1,Z_2,Z_3)$
where $Z_1=Y_1$, $Z_2=Y_2+a'(Y_2)$, $Z_3=Y_3$.

\bigskip

Step 6.
Using computers, we find that the action of $\rho$ is given by
\[
\rho: Z_1\mapsto 1/Z_1,~ Z_2\mapsto Z_2/Z_1,~ Z_3\mapsto
-2Z_1^3/(A'Z_3)
\]
where
$A'=-2Z_1^2+Z_1Z_2+Z_2^2+4Z_1^3-2Z_1Z_2^2-2Z_1^4+3Z_1^2Z_2^2+Z_1^4Z_2-2Z_1^3Z_2^2+Z_1^4Z_2^2$.

Define $U_1=Z_2+\rho(Z_2)$, $U_2=\sqrt{-1}(Z_2-\rho(Z_2))$, $U_3=Z_3+\rho(Z_3)$, $U_4=\sqrt{-1}(Z_3-\rho(Z_3))$.
It is easy to verify that $k(\sqrt{-1})(Z_1,Z_2,Z_3)^{\langle\rho\rangle}=k(U_1,U_2,U_3,U_4)$ with a relation
\[
U_3^2+U_4^2+32(U_1^2+U_2^2)/B=0
\]
where $B=(U_1^2-3U_2^2)^2+4U_1(U_1^2-3U_2^2)+32U_2^2$.

Divide the above relation by $16(U_1^2+U_2^2)^2/B^2$. We get
\[
\bigl(BU_3/(4U_1^2+4U_2^2)\bigr)^2+\bigl(BU_4/(4U_1^2+4U_2^2)\bigr)^2+2B/(U_1^2+U_2^2)=0.
\]

Multiply this relation by $U_1^2+U_2^2$ and use the identity $(\alpha^2+\beta^2)(\gamma^2+\delta^2)
=(\alpha\delta+\beta\gamma)^2+(\alpha\gamma-\beta\delta)^2$.
The relation is simplified as
\begin{equation} \label{eq4.3}
V_3^2+V_4^2+2B=0
\end{equation}
where $V_3=B(U_1U_3+U_2U_4)/(4U_1^2+4U_2^2)$,
$V_4=B(U_1U_4-U_2U_3)/(4U_1^2+4U_2^2)$. Note that
$k(U_1,U_2,U_3,U_4)=K(U_1,U_2,V_3,V_4)$.

Define $w_1=8U_1/(U_1^2-3U_2^2)$, $w_2=8U_2/(U_1^2-3U_2^2)$, $w_3=V_3/(U_1^2-3U_2^2)$, $w_4=V_4/(U_1^2-3U_2^2)$.
Then $k(U_1,U_2,V_3,V_4)=k(w_1,w_2,w_3,w_4)$ and the relation \eqref{eq4.3} becomes
\[
w_3^2+w_4^2+2+w_1+w_2^2=0.
\]

Hence $w_1\in k(w_2,w_3,w_4)$.
Thus $k(\sqrt{-1})(Z_1,Z_2,Z_3)^{\langle\rho\rangle}=k(w_2,w_3,w_4)$ is $k$-rational. Done.

\bigskip

\textit{Case} 3. $\sqrt{-2}\in k$, but $\sqrt{-1}\notin k$.

We use the 4-dimensional faithful representation of $\widehat{S_4}$ over $k$ provided by Formula \eqref{eq3.3}.
This representation provides an action of $\widehat{S_4}$ on $k(x_1,x_2,x_3,x_4)$ given by
\begin{align*}
a':{}& x_1\mapsto \sqrt{-2}(x_1-x_2)/2,~ x_2\mapsto \sqrt{-2}(x_1+x_2)/2,~ x_3\mapsto \sqrt{-2}(-x_3-x_4)/2, \\
& x_4\mapsto \sqrt{-2}(x_3-x_4)/2, \\
b:{}& x_1\mapsto x_4\mapsto -x_1,~ x_2\mapsto -x_3,~ x_3\mapsto x_2, \\
c:{}& x_1\mapsto (x_1-x_2-x_3-x_4)/2,~ x_2\mapsto (x_1+x_2+x_3-x_4)/2, \\
& x_3\mapsto (x_1-x_2+x_3+x_4)/2,~ x_4\mapsto (x_1+x_2-x_3+x_4)/2.
\end{align*}

The proof of this case is very similar to that of Case 2.

\bigskip

Step 1.
Apply Theorem \ref{t2.2}.
We find that $k(\widehat{S_4})$ is rational over $k(x_1,x_2,x_3,x_4)^{\widehat{S_4}}$.
Hence the proof is reduced to proving $k(x_1,x_2,x_3,x_4)^{\widehat{S_4}}$ is $k$-rational.

\bigskip

Step 2.
Write $\pi=\fn{Gal}(k(\sqrt{-1})/k)=\langle\rho\rangle$ where $\rho(\sqrt{-1})=-\sqrt{-1}$.

Extend the actions of $\pi$ and $\widehat{S_4}$ to $k(\sqrt{-1})(x_1,x_2,x_3,x_4)$ as in Step 2 of Case 2.
We find that $k(x_1,x_2,x_3,x_4)^{\widehat{S_4}}=k(\sqrt{-1})(x_1,x_2,x_3,x_4)^{\langle a',b,c,\rho\rangle}$.

Define $y_1,y_2,y_3,y_4\in k(\sqrt{-1})(x_1,x_2,x_3,x_4)$ by
\begin{align*}
y_1 &= -x_1-\sqrt{-1}x_2+x_3+\sqrt{-1}x_4, & y_2 &= \sqrt{-1}x_1-x_2+\sqrt{-1}x_3-x_4, \\
y_3 &= x_1-\sqrt{-1}x_2+x_3-\sqrt{-1}x_4, & y_4 &= \sqrt{-1}x_1+x_2-\sqrt{-1}x_3-x_4.
\end{align*}

We get $k(\sqrt{-1})(x_1,x_2,x_3,x_4)=k(\sqrt{-1})(y_1,y_2,y_3,y_4)$ and the actions are
\begin{equation} \label{eq4.4}
\begin{aligned}
a':{}& y_1\mapsto (-y_1-y_2)/\sqrt{2},~ y_2\mapsto (y_1-y_2)/\sqrt{2},~ y_3\mapsto (y_3+y_4)/\sqrt{2}, \\
& y_4\mapsto (-y_3+y_4)/\sqrt{2}, \\
b:{}& y_1\mapsto \sqrt{-1}y_1,~y_2\mapsto -\sqrt{-1}y_2,~ y_3\mapsto \sqrt{-1}y_3,~ y_4\mapsto -\sqrt{-1}y_4, \\
c:{}& y_1\mapsto (y_1-\sqrt{-1}y_2)/(1+\sqrt{-1}),~ y_2\mapsto (y_1+\sqrt{-1}y_2)/(1+\sqrt{-1}), \\
& y_3\mapsto (y_3-\sqrt{-1}y_4)/(1+\sqrt{-1}), ~ y_4\mapsto (y_3+\sqrt{-1}y_4)/(1+\sqrt{-1}), \\
\rho:{}& y_1\mapsto \sqrt{-1}y_4,~ y_2\mapsto -\sqrt{-1}y_3,~ y_3\mapsto -\sqrt{-1}y_2,~ y_4\mapsto \sqrt{-1}y_1.
\end{aligned}
\end{equation}

Note that the action of ${a'}^2$ is
\[
{a'}^2: y_1\mapsto y_2\mapsto -y_1,~ y_3\mapsto y_4\mapsto -y_3.
\]

Compare Formula \eqref{eq4.2} and Formula \eqref{eq4.4}. The
actions of ${a'}^2$, $b$, $c$ in both cases are the same.

\bigskip

Step 3.
Define $z_1=y_1/y_2$, $z_2=y_3/y_4$, $z_3=y_1/y_3$.
As in Step 3 of Case 2, it suffices to prove $k(\sqrt{-1})(z_1,z_2,z_3)^{\langle\widehat{S_4},\pi\rangle}$ is $k$-rational.

Define $u_1$, $u_2$, $u_3$, $v_1$, $v_2$, $v_3$, $X_1$, $X_2$, $X_3$ by the same formulae as in Step 3 and Step 4 of Case 2.
We find that $k(\sqrt{-1})(z_1,z_2,z_3)^{\langle b,{a'}^2,c\rangle}=k(\sqrt{-1})(X_1,X_2,X_3)$.

\bigskip

Step 4. The actions of $a'$, $\rho$ on $X_1$, $X_2$, $X_3$ are
slightly different from Step 5 of Case 2. In the present case, we
have
\begin{align*}
a':{}& X_1\mapsto X_1/(X_1^2-X_1X_2+X_2^2),~X_2\mapsto X_2/(X_1^2-X_1X_2+X_2^2),~ X_3\mapsto -X_3, \\
\rho:{}& X_1\mapsto X_2/(X_1^2-X_1X_2+X_2^2),~ X_2\mapsto X_1/(X_1^2-X_1X_2+X_2^2),~X_3 \mapsto -2A/X_3
\end{align*}
where $A=g_1g_2g_3^{-1}$ and
\begin{align*}
g_1 &= (1+X_1)^2-X_2(1+X_1)+X_2^2, \quad g_2= (1+X_2)^2-X_1(1+X_2)+X_1^2, \\
g_3 &= 1+X_1+X_2+X_1^3+X_2^3+X_1X_2(3X_1X_2-2X_1^2-2X_2^2+2)+X_1^4+X_2^4.
\end{align*}

Note that the action of $\rho$ is the same as in Step 5 of Case 2.

Define $Y_1=X_1/X_2$, $Y_2=X_1$, $Y_3=X_1X_3/g_1$. We get
\[
a': Y_1\mapsto Y_1,~ Y_2\mapsto Y_1^2/\bigl(Y_2(1-Y_1+Y_1^2)\bigr),~ Y_3\mapsto -Y_3.
\]

Thus $k(\sqrt{-1})(X_1,X_2,X_3)^{\langle a'\rangle}=k(\sqrt{-1})(Y_1,Y_2,Y_3)^{\langle a'\rangle}=k(\sqrt{-1})(Z_1,Z_2,Z_3)$
where $Z_1=Y_1$, $Z_2=Y_2+a'(Y_2)$, $Z_3=Y_3(Y_2-a'(Y_2))$.

\bigskip

Step 5.
Using computers, we find that the action of $\rho$ is given by
\[
\rho: Z_1\mapsto 1/Z_1,~ Z_2\mapsto Z_2/Z_1,~ Z_3\mapsto C/Z_3
\]
where $C=2Z_1^2(-4Z_1^2+Z_2^2-Z_1Z_2^2+Z_1^2Z_2^2)/[(1-Z_1+Z_1^2)(-2Z_1^2+Z_1Z_2+Z_2^2+4Z_1^3-
2Z_1Z_2^2-2Z_1^4+3Z_1^2Z_2^2+Z_1^4Z_2-2Z_1^3Z_2^2+Z_1^4Z_2^2)]$.

Define $U_1=Z_2+\rho(Z_2)$, $U_2=\sqrt{-1}(Z_2-\rho(Z_2))$, $U_3=Z_3+\rho(Z_3)$, $U_4=\sqrt{-1}(Z_3-\rho(Z_3))$.
We find that $k(\sqrt{-1})(Z_1,Z_2,Z_3)^{\langle\rho\rangle}=k(U_1,U_2,U_3,U_4)$ with a relation
\begin{equation} \label{eq4.5}
U_3^2+U_4^2=8(U_1^2+U_2^2)^2(-16+U_1^2-3U_2^2)/\bigl(B(U_1^2-3U_2^2)\bigr)
\end{equation}
where $B=(U_1^2-3U_2^2)^2+4U_1(U_1^2-3U_2^2)+32 U_2^2$.

Note that the above formula of $B$ is identically the same as that in Step 6 of Case 2.

It remains to simplify the relation in Formula \eqref{eq4.5}.

Divide both sides of Formula \eqref{eq4.5} by $(U_1^2+U_2^2)^2$. We get
\[
\bigl(U_3/(U_1^2+U_2^2)\bigr)^2+\bigl(U_4/(U_1^2+U_2^2)\bigr)^2=8(-16+U_1^2-3U_2^2)/\bigl(B(U_1^2-3U_2^2)\bigr).
\]

Divide both sides of the above identity by $(2(U_1^2-3U_2^2)/B)^2$.
We get a relation
\begin{equation} \label{eq4.6}
V_3^2+V_4^2=2(1-V_1^2+3V_2^2)(1+V_1+2V_2^2)
\end{equation}
where $V_1=4U_1/(U_1^2-3U_2^2)$, $V_2=4U_2/(U_1^2-3U_2^2)$, $V_3=BU_3/((U_1^2-3U_2^2)(2U_1^2+2U_2^2))$,
$V_4=BU_4/((U_1^2-3U_2^2)(2U_1^2+2U_2^2))$.

Note that $k(U_1,U_2,U_3,U_4)=k(V_1,V_2,V_3,V_4)$.

Define $w_1=1/(1+V_1)$, $w_2=V_2/(1+V_1)$, $w_3=V_3/(1+V_1)^2$, $w_4=V_4/(1+V_1)^2$.
We get $k(V_1,V_2,V_3,V_4)=k(w_1,w_2,w_3,w_3)$ and the relation \eqref{eq4.6} becomes
\[
w_3^2+w_4^2=2(-1+2w_1+3w_2^2)(w_1+2w_2^2).
\]

Divide the above identity by $(w_1+2w_2^2)^2$.
We get
\[
\big(w_3/(w_1+2w_2^2)\big)^2+\big(w_4/(w_1+2w_2^2)\big)^2=2(-1+2w_1+3w_2^2)/(w_1+2w_2^2).
\]

Since $2(-1+2w_1+3w_2^2)/(w_1+2w_2^2)$ is a ``fractional linear transformation" of $w_1$
and it belongs to $k(w_2,w_3/(w_1+2w_2^2),w_4/(w_1+2w_2^2))$,
we find $w_1\in k(w_2,w_3/(w_1+2w_2^2),w_4/(w_1+2w_2^2))$.
Thus $k(w_1,w_2,w_3,w_4)=k(w_2,w_3/(w_1+2w_2^2),w_4/(w_1+2w_2^2))$.
We find that $k(\sqrt{-1})(Z_1,Z_2,Z_3)^{\langle\rho\rangle}$ is $k$-rational.

\bigskip

\textit{Case} 4. $\sqrt{-1}\in k$, but $\sqrt{2}\notin k$.

The proof is similar to that in Case 2 or Case 3; thus the
detailed proof is omitted. In case $\fn{char} k =0$, we may apply
Plans's Theorem, i.e. Theorem \ref{t1.3}.  \qed

\section{Other double covers of \boldmath{$S_n$}}

In this section we consider the rationality problem of $G_n$ which
is a double cover of the symmetric group other than
$\widehat{S_n}$ and $\widetilde{S_n}$.

There are four double covers of the symmetric group $S_n$ when $n
\geq 4$. The trivial case is the split group $S_n\times C_2$. The
rationality problem of the group $S_n\times C_2$ is easy because
we may apply Theorem \ref{t2.6}. It remains to consider the
non-split cases: They are $\widehat{S_n}$, $\widetilde{S_n}$, and
the group $G_n$ defined below.

\bigskip
\begin{defn} \label{d5.1}
For $n\ge 3$, consider the group $G_n$ such that the short
exact sequence $1\to \{\pm 1\}\to G_n \stackrel{p}{\to} S_n\to 1$
is induced by the cup product $\varepsilon_n\cup \varepsilon_n\in
H^2(S_n,\{\pm 1\})$ (see, for example, \cite[page 654]{Se}) where
$\varepsilon_n: S_n\to \{\pm 1\}$ is the signed map, i.e.\
$\varepsilon_n(\sigma)=-1$ if and only if $\sigma\in S_n$ is an
odd permutation. Note that the group $G_n$ is denoted by
$\overline{S_n}$ in \cite{Pl2}.
\end{defn}

The group $G_n$ can be constructed explicitly as follows. Let
$1\to \{\pm 1\}\to C_4=\{\pm\sqrt{-1},\pm 1\}\stackrel{p_0}{\to}
\{\pm 1\}\to 1$ be the short exact sequence defined by
$p_0(\sqrt{-1})=-1$. The group $G_n$ can be realized as the
pull-back of the following diagram
\[
\xymatrix{ & S_n \ar[d]^{\varepsilon_n} \\ C_4 \ar[r]_{\!\!\!p_0} & \{\pm 1\}.}
\]

Explicitly, as a subgroup of $S_n\times C_4$, $G_n=
\{(\sigma,(\sqrt{-1})^i)\in S_n\times C_4: \epsilon_n(\sigma) =
p_0((\sqrt{-1})^i) \} =(A_n\times \{\pm 1\})\cup
\{(\sigma,\pm\sqrt{-1})\in S_n\times C_4: \sigma\notin A_n\}$.

If $k$ is a field with $\fn{char}k\ne 2$, a faithful
$2n$-dimensional representation can be defined as follows. Let
$X=\bigl(\oplus_{1\le i\le n} k\cdot x_i\bigr) \oplus
\bigl(\oplus_{1\le i\le n} k\cdot y_i\bigr)$ and $G_n$ acts on $X$
by, for $1\le i\le n$,
\begin{align}
t:{} & x_i \mapsto -x_i,~ y_i\mapsto -y_i, \notag \\
\tau:{} & x_i\mapsto x_{\tau(i)},~ y_i\mapsto y_{\sigma^{-1}\tau\sigma(i)} \label{eq5.1} \\
\bar{\sigma}:{} & x_i\mapsto y_i \mapsto -x_i \notag
\end{align}
where $t=(1,-1)\in G_n \subset S_n\times C_4$, $\tau\in A_n$ and $\tau$ is identified with $(\tau,1)\in G_n$,
$\sigma=(1,2)\in S_n$ and $\bar{\sigma}=(\sigma,\sqrt{-1})\in G_n$.

The following theorem was proved by Plans \cite[Theorem 14
(b)]{Pl2} under the assumptions that $\fn{char} k =0$ and
$\sqrt{-1}\in k$. Our proof is different from Plans's proof even
in the situation $\fn{char} k =0$.

\begin{theorem} \label{t5.2}
Assume that $k$ is a field satisfying that \rm{(i)} either
$\fn{char}k =0$ or $\fn{char}k = p > 0$ with $p \nmid n!$, and
\rm{(ii)} $\sqrt{-1}\in k$. Then $k(G_n)$ is $k$-rational for
$n\ge 3$.
\end{theorem}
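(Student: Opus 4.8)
The plan is to run the same two-move strategy used for Theorem \ref{t1.4}: first replace the regular representation by the explicit low-dimensional one via the no-name lemma, and then reduce the resulting monomial action to a rational field. First I would reduce to the $2n$-dimensional representation $X$ of Formula \eqref{eq5.1}. Since $n\ge 3$, the hypothesis $p\nmid n!$ already forces $\fn{char}k\ne 2,3$, so $\fn{char}k\nmid |G_n|=2\cdot n!$ and $k[G_n]$ is semisimple; hence $X$ embeds as a direct summand of the regular representation, and, exactly as in Case 1 of Section 4, Theorem \ref{t2.2} shows that $k(G_n)$ is rational over $k(X)^{G_n}=k(x_1,\dots,x_n,y_1,\dots,y_n)^{G_n}$. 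It therefore suffices to prove the latter field is $k$-rational.

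Next I would use $\sqrt{-1}\in k$ to diagonalize the action. Writing $\sigma=(1,2)$ and setting
\[
p_i=x_i+\sqrt{-1}\,y_{\sigma(i)},\qquad q_i=x_i-\sqrt{-1}\,y_{\sigma(i)}\qquad(1\le i\le n),
\]
a direct check converts \eqref{eq5.1} into a \emph{monomial} action: each $\tau\in A_n$ acts by $p_i\mapsto p_{\tau(i)}$, $q_i\mapsto q_{\tau(i)}$, while $\bar\sigma$ acts by $p_i\mapsto -\sqrt{-1}\,p_{\sigma(i)}$, $q_i\mapsto \sqrt{-1}\,q_{\sigma(i)}$, and the central element $t=\bar\sigma^{2}$ acts by $-1$. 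In particular $L:=k(p_1,\dots,p_n)$ is $G_n$-stable, and on $L$ the group acts by $g\cdot p_i=c(g)\,p_{\pi_g(i)}$, where $\pi_g\in S_n$ is the image of $g$ and $c:G_n\to\langle\sqrt{-1}\rangle$ is the character with $c(A_n)=1$, $c(\bar\sigma)=-\sqrt{-1}$; this action is faithful because $t\notin A_n$ contributes the scalar $-1$.

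I would then peel off the $q$-variables by a second application of Theorem \ref{t2.2}. Over $L$ the $q_1,\dots,q_n$ are permuted and scaled linearly (their matrices lie in $GL_n(k)$) while $G_n$ acts faithfully on $L$, so Theorem \ref{t2.2} gives $k(p,q)^{G_n}=L^{G_n}(w_1,\dots,w_n)$ with the $w_i$ fixed and algebraically independent. This reduces everything to the $n$-variable field $L^{G_n}$. Passing to the ratios $\theta_i=p_i/p_n$ ($1\le i\le n-1$) kills the scalar $c$, so $M:=k(\theta_1,\dots,\theta_{n-1})$ carries the projective permutation action of $S_n$; its invariant field $M^{S_n}$ is the field of projective symmetric functions, generated over $k$ by $e_i/e_1^{\,i}$ ($2\le i\le n$, the $e_i$ being elementary symmetric in the $p_j$), and is $k$-rational. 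Moreover $L=M(p_n)$ with $g(p_n)=c'(g)\,p_n$ for the cocycle $c'(g)=c(g)\,\theta_{\pi_g(n)}\in M^{*}$.

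The hard part is this final one-variable descent of $p_n$, and it is genuinely obstructed on the nose: the cocycle $g\mapsto \theta_{\pi_g(n)}$ is \emph{not} a coboundary in $M^{*}$ (a naive splitting would demand an $n$-th root of $\theta_1\cdots\theta_{n-1}$), so $p_n$ cannot be rescaled to transform by a constant character. The way around this is to remove the central involution first: since $t$ fixes $M$ and sends $p_n\mapsto -p_n$, one has $L^{\langle t\rangle}=M(p_n^{2})$, and the squared cocycle $c'^{\,2}$ now descends to an honest $S_n$-cocycle valued in $M^{*}$. As $M/M^{S_n}$ is Galois with group $S_n$, Hilbert's Theorem 90 produces $\mu\in M^{*}$ with $c'(g)^{2}=\pi_g(\mu)/\mu$, so that $W:=p_n^{2}/\mu$ is $G_n$-invariant and $L^{G_n}=\bigl(M(p_n^{2})\bigr)^{S_n}=M^{S_n}(W)$ is rational over $M^{S_n}$. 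Chaining the three reductions then yields the $k$-rationality of $k(G_n)$. The only routine points are the bookkeeping of the $\langle\sqrt{-1}\rangle$-scalars in the change of variables and the rationality of $M^{S_n}$; the cohomological descent after squaring is where the real content sits.
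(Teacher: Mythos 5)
Your proof is correct, and after the common first step (the no-name reduction of $k(G_n)$ to $k(X)^{G_n}$ via Theorem \ref{t2.2}) it diverges genuinely from the paper's argument. The paper normalizes by the sums $u_0=\sum_i x_i$, $v_0=\sum_i y_i$, strips off $u_0v_0$ and $u_0/v_0$ with Theorem \ref{t2.1} (this is where it uses $\sqrt{-1}$, to linearize $w_2\mapsto -1/w_2$), and is then left with the linear $S_n$-action on $W\oplus W'$ ($W$ the standard representation, $W'$ its sign twist); in characteristic $0$ it identifies this representation by a character computation, and in general it runs the no-name lemma twice more against the regular representation of $S_n$. You instead spend $\sqrt{-1}$ at the source to diagonalize $\bar{\sigma}$, turning \eqref{eq5.1} into a twisted permutation (monomial) action on $p_i=x_i+\sqrt{-1}\,y_{\sigma(i)}$, $q_i=x_i-\sqrt{-1}\,y_{\sigma(i)}$ --- I checked $\tau(p_i)=p_{\tau(i)}$, $\bar{\sigma}(p_i)=-\sqrt{-1}\,p_{\sigma(i)}$, $\bar{\sigma}(q_i)=\sqrt{-1}\,q_{\sigma(i)}$, and they are right --- then peel off the $q$'s with Theorem \ref{t2.2}, projectivize the $p$'s, and descend $p_n$ after killing the central $-1$ by squaring. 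This treats all allowed characteristics uniformly (you never divide by $n$, only by $2$, and semisimplicity needs only $p\nmid n!$), which is cleaner than the paper's case split, and it makes the role of the hypothesis $\sqrt{-1}\in k$ transparent. Two minor remarks: the final descent of $p_n$ is already covered by Theorem \ref{t2.1} applied to $M(p_n)$ with the full group $G_n$ (that theorem needs no coboundary hypothesis and directly yields $M(p_n)^{G_n}=M^{G_n}(f)$), so your Hilbert 90 computation, though correct, re-proves a special case of it; and the identification $M^{S_n}=k(e_i/e_1^{\,i}:2\le i\le n)$ rests on the standard fact that this is the degree-zero part of the graded field $k(e_1,\ldots,e_n)$, which deserves a one-line justification.
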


\begin{proof}
Step 1. Apply Theorem \ref{t2.2}. We find that $k(G_n)$ is
rational over $k(x_i,y_i: 1\le i\le n)^{G_n}$ where $G_n$ acts on
the rational function field $k(x_i,y_i: 1\le i\le n)$ by Formula
\eqref{eq5.1}.

\bigskip
Step 2. Define $u_0=\sum_{1\le i\le n}x_i$, $v_0=\sum_{1\le i\le
n}y_i$ and $u_i=x_i/u_0$, $v_i=y_i/v_0$ for $1\le i\le n$. Note
that $k(x_i,y_i: 1\le i\le n)=k(u_j,v_j:0\le j\le n)$ with the
relations $\sum_{1\le i\le n}u_i=\sum_{1\le i\le n}v_i=1$. The
action of $G_n$ is given by
\begin{align*}
t:{}& u_0\mapsto -u_0,~v_0\mapsto -v_0,~u_i\mapsto u_i,~v_i\mapsto v_i, \\
\tau:{}& u_0\mapsto u_0,~ v_0\mapsto v_0,~ u_i\mapsto u_{\tau(i)},~ v_i\mapsto v_{\sigma^{-1}\tau\sigma(i)}, \\
\bar{\sigma}:{}& u_0\mapsto v_0\mapsto -v_0,~ u_i\mapsto v_i\mapsto u_i
\end{align*}
where $1\le i\le n$ and $t$, $\tau$, $\bar{\sigma}$ are defined in Formula \eqref{eq5.1}.

Define $w_1=u_0v_0$, $w_2=u_0/v_0$.
Then $k(u_j,v_j: 0\le j\le n)^{\langle t\rangle}=k(u_i,v_i:1\le i\le n)(w_1,w_2)$.

Note that $\tau(w_i)=w_i$ for $1\le i\le 2$, $\bar{\sigma}(w_1)=-w_1$, $\bar{\sigma}(w_2)=-1/w_2$.
By Theorem \ref{t2.1},
$k(u_i,v_i:1\le i\le n)(w_1,w_2)^{G_n/\langle t\rangle}=k(u_i,v_i: 1\le i\le n)(w_2)^{G_n/\langle t\rangle}(w')$
for some $w'$ fixed by the action of $G_n/\langle t\rangle$.
Moreover, we may identify $G_n/\langle t\rangle$ with $S_n$ and identify $\bar{\sigma}$ (modulo $\langle t\rangle$) with $\sigma$.

Define $U_i=u_i - (1/n), V_i=v_i - (1/n)$ for $ 1 \le i \le n$. We
find that $\sum_{1\le i\le n}U_i=\sum_{1\le i\le n} V_i=0$ and the
action of $S_n$ on $k(U_i,V_i:1\le i\le n)$ becomes linear. We
will consider $k(U_i,V_i:1\le i\le n)(w_2)^{S_n}$. The action of
$S_n$ is given by
\begin{equation} \label{eq5.2}
\begin{aligned}
\tau:{}& U_i \mapsto U_{\tau(i)},~ V_i\mapsto V_{\sigma^{-1}\tau\sigma(i)},~ w_2 \mapsto w_2\\
\sigma:{}& U_i\mapsto V_i\mapsto U_i,~ w_2 \mapsto -1/w_2
\end{aligned}
\end{equation}
where $1\le i\le n$, $\tau\in A_n$, $\sigma=(1,2)$ and $\sum_{1\le
i\le n} U_i=\sum_{1\le i\le n} V_i=0$.

\bigskip
Step 3. When $\fn{char} k =0$, there is a short-cut to prove this
theorem. The proof of the general case when $\fn{char} k \nmid n!$
will be postponed till Step 5.

Consider the action of $S_n$ on the linear space $\sum_{1\le i\le
n} k \cdot U_i \oplus \sum_{1\le i\le n} k \cdot V_i$. We will
prove that the representation of $S_n$ associated to this linear
space is reducible.

Let $W =\sum_{1\le i\le n} k \cdot s_i$ be the standard
representation of $S_n$, i.e. $\sum_{1\le i\le n} s_i=0$ and
$\lambda (s_i)=s_{\lambda (i)}$ for all $\lambda \in S_n$, for all
$1 \le i \le n$. Let $W'$ be the representation space of the
tensor product of the standard representation and the linear
character $\varepsilon_n: S_n\to \{\pm 1\}$. We will show that the
representation associated to $\sum_{1\le i\le n} k \cdot U_i
\oplus \sum_{1\le i\le n} k \cdot V_i$ is equivalent to that of $W
\oplus W'$.

Since $\fn{char} k =0$, it suffices to show that the characters of
these two representations are completely the same. This fact is
easy to check for even permutations of $S_n$. As to the odd
permutations, note that every odd permutation can be written as
$\sigma \tau$ for some $\tau \in A_n$. Since $\sigma \tau
(U_i)=V_{\tau(i)}, \sigma \tau (V_i)=U_{\sigma^{-1} \tau \sigma(i)
}$, we find that the value of the character of $\sigma \tau$ for
the representation associated to $\sum_{1\le i\le n} k \cdot U_i
\oplus \sum_{1\le i\le n} k \cdot V_i$ is zero. Hence the result.

\bigskip
Step 4. In this paragraph we consider the general case when
$\fn{char} k \nmid n!$. Since $\sqrt{-1}\in k$, define
$w=(\sqrt{-1}-w_2)/(\sqrt{-1}+w_2)$. We find that $\tau(w)=w$ for
$\tau\in A_n$ and $\sigma(w)=-w$. Apply Theorem \ref{t2.1}. We
find that $k(u_i,v_i:1\le i\le n)(w_2)^{S_n}=k(U_i,V_i:1\le i\le
n)^{S_n} (w'')$ for some $w''$ fixed by the action of $S_n$.

In particular, when $\fn{char} k =0$, apply Theorem \ref{t2.1} to
$k(U_i,V_i:1\le i\le n)^{S_n}$. We find that $k(U_i,V_i:1\le i\le
n)^{S_n}= k(W \oplus W')^{S_n}= k(s_i:1\le i\le n-1)^{S_n}(t_1,
\cdots t_{n-1})$ where each $t_i$ is fixed by $S_n$. Obviously the
field $k(s_i:1\le i\le n-1)^{S_n}$ is $k$-rational. Hence the
result.

\bigskip
Step 5. Now return to the general case when $\fn{char} k \nmid
n!$. Because of the above step, it suffices to show that
$k(U_i,V_i:1\le i\le n)^{S_n}(X_1,\ldots,X_N)$ is $k$-rational
where $N=2\cdot (n!)-2(n-1)$. Once we  know $k(U_i,V_i:1\le i\le
n)^{S_n}(X_1,\ldots,X_N)$ is $k$-rational, we find that $k(G_n)$
is $k$-rational.

Since $\fn{char} k \nmid n!$, we may embed the space $\sum_{1\le
i\le n} k \cdot U_i \oplus \sum_{1\le i\le n} k \cdot V_i$ into
the regular representation space $\oplus_{g\in S_n} k\cdot x(g)$.
Thus Theorem \ref{t2.2} is applicable. We find that $k(x(g):g\in
S_n)^{S_n}$ is rational over $k(U_i,V_i: 1\le i\le n)^{S_n}$.
Explicitly $k(x(g):g\in S_n)^{S_n}=k(U_i,V_i:1\le i\le
n)^{S_n}(Y_1,\ldots,Y_{N'})$ where $N'=n!-2(n-1)$.

On the other hand, the regular representation space $\oplus_{g\in
S_n} k\cdot x(g)$ contains the ordinary permutation action
$\oplus_{1\le i\le n}k\cdot z_i$ where $S_n$ acts on
$z_1,\ldots,z_n$ by $g\cdot z_i=z_{g(i)}$ for $1\le i\le n$, $g\in
S_n$. Apply Theorem \ref{t2.2} again. We find that $k(x(g):g\in
S_n)^{S_n}$ is rational over $k(z_i: 1\le i\le n)^{S_n}$. Since
$k(z_i: 1\le i\le n)^{S_n}$ is $k$-rational, we find that
$k(x(g):g\in S_n)^{S_n}$ is $k$-rational. Hence $k(U_i,V_i:1\le
i\le n)^{S_n}(X_1,\ldots,X_N)$ is $k$-rational. Thus $k(G_n)$ is
$k$-rational.
\end{proof}

The first part of the following theorem was proved by Plans
\cite[Theorem 14, (b)]{Pl2}; there he assumed that $\fn{char} k
=0$.

\begin{theorem} \label{t5.3}
\rm{(1)} If $k$ is a field with $\fn{char}k\ne 2$ or $3$, then
$k(G_3)$ is $k$-rational.

\rm{(2)} If $k$ is a field with $\fn{char}k\ne 2$ or $3$, then
$k(G_4)$ is $k$-rational. Moreover, if $\fn{char}k = 0$, then
$k(G_5)$ is also $k$-rational.
\end{theorem}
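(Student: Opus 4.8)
The plan is to imitate, for the $2n$-dimensional representation \eqref{eq5.1}, the Galois-descent strategy used for $\widehat{S_4}$ in Section 4, and to obtain $G_5$ from $G_4$. First I would apply Theorem \ref{t2.2} as in Step 1 of Theorem \ref{t5.2}: the action \eqref{eq5.1} is faithful, $k$-linear, and involves no $\sqrt{-1}$, so $k(G_n)$ is rational over $k(x_i,y_i:1\le i\le n)^{G_n}$. The key point is that Steps 1 and 2 of the proof of Theorem \ref{t5.2} never use $\sqrt{-1}\in k$; they only manipulate $u_0,v_0,u_i,v_i$, the elements $w_1=u_0v_0$, $w_2=u_0/v_0$, and Theorem \ref{t2.1}. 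Running them verbatim reduces the problem, for any $k$ with $\fn{char}k\nmid n!$ (which holds for $n=3,4$ under our hypotheses), to showing that $k(U_i,V_i:1\le i\le n)(w_2)^{S_n}$ is $k$-rational, where $\sum_iU_i=\sum_iV_i=0$, the $U_i,V_i$ are acted on linearly, $\tau\in A_n$ fixes $w_2$, and $\sigma=(1,2)$ sends $w_2\mapsto -1/w_2$, as in \eqref{eq5.2}.

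To dispense with the assumption $\sqrt{-1}\in k$ that powered Step 4 of Theorem \ref{t5.2}, I would descend through $k(\sqrt{-1})$ as in Cases 2 and 3 of Section 4. Put $\pi=\fn{Gal}(k(\sqrt{-1})/k)=\langle\rho\rangle$ and extend the action to $k(\sqrt{-1})(U_i,V_i)(w_2)$ with $\rho$ fixing the $U_i,V_i,w_2$. The Cayley substitution $w=(\sqrt{-1}-w_2)/(\sqrt{-1}+w_2)$ then satisfies $\sigma(w)=-w$, $\tau(w)=w$ and $\rho(w)=1/w$, so $w$ becomes multiplicative. Now I would peel off the coordinates one block at a time exactly as in Section 4: Theorem \ref{t2.1} strips variables on which the enlarged group $\langle S_n,\rho\rangle$ acts affinely, while Theorems \ref{t2.3} and \ref{t2.4} handle the monomial and $3$-cycle parts. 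For $n=3$ and $n=4$ this is a finite computation, to be carried out with computer assistance in the style of Steps 5 and 6 of Case 2.

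The step I expect to be the real obstacle is the last one. Since the involution $w_2\mapsto -1/w_2$ has no fixed point rational over $k$, a direct passage to invariants produces a conic, i.e.\ a potential Brauer obstruction of the shape $(-1,\fn{disc})$ over the base. The substance of the argument is that the full $2(n-1)$-dimensional $U_i,V_i$-space provides enough extra coordinates that the resulting quadratic relation can be rewritten, via the norm identity $(\alpha^2+\beta^2)(\gamma^2+\delta^2)=(\alpha\delta+\beta\gamma)^2+(\alpha\gamma-\beta\delta)^2$ used at \eqref{eq4.3}, into one in which a single generator appears only to the first degree. Solving linearly for that generator then yields $k$-rationality and, in effect, shows the obstruction vanishes for $n=3,4$; this is where I expect the heavy, $n$-dependent computation to concentrate, and why only small $n$ are treated.

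For $G_5$ in characteristic $0$ I would avoid the (forbidding) $S_5$-descent and instead reduce to $G_4$: by the fibration argument Plans uses to prove $k(\widehat{S_5})$ rational over $k(\widehat{S_4})$ (Theorem \ref{t2.5}), the same reasoning gives that $k(G_5)$ is rational over $k(G_4)$ when $\fn{char}k=0$. Together with the $G_4$ case this proves $k(G_5)$ is $k$-rational and explains the restriction to characteristic $0$ for $n=5$.
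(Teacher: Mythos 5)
Your reduction to $k(U_i,V_i:1\le i\le n)(w_2)^{S_n}$ with the action \eqref{eq5.2} is exactly the paper's (it explicitly reuses Steps 1--2 of the proof of Theorem \ref{t5.2}, noting that $\sqrt{-1}\in k$ is not needed until Step 4), and your treatment of $G_5$ via \cite[Theorem 11]{Pl2} is also the paper's Case 3. But for the core of parts (1) and (2) you take a genuinely different and, as written, incomplete route. You propose to adjoin $\sqrt{-1}$, enlarge the group by $\rho$ generating $\fn{Gal}(k(\sqrt{-1})/k)$, and descend as in Cases 2--3 of Section 4, accepting that the final passage to $\rho$-invariants will produce a quadratic relation (a conic bundle) to be linearized by the norm identity of \eqref{eq4.3}. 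You yourself flag this as ``the real obstacle'' and leave it as an unexecuted, $n$-dependent computer calculation whose outcome you only conjecture. That is the gap: the entire substance of the theorem is concentrated in a step you have not carried out and for which you offer no a priori reason to expect success.

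The missing idea is that no descent through $k(\sqrt{-1})$ is needed, so the conic never arises. The paper works entirely over $k$: after monomial and Cayley-type changes of variables (for $n=3$: $w_3=U_1/V_2$, $w_4=U_2/V_1$, $w_5=V_1/V_2$, then substitutions of the form $(1-w_j)/(1+w_j)$; for $n=4$: first the $t_i$ and $T_i$ that kill the Klein subgroup $\langle\lambda_1,\lambda_2\rangle$), Masuda's Theorem \ref{t2.4} handles the $3$-cycle, and one is left with $\sigma$ acting \emph{linearly} on the remaining generators over the coefficient field $L=k(w_2)$, on which $\sigma$ acts faithfully by $w_2\mapsto-1/w_2$. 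Theorem \ref{t2.2} then yields $L^{\langle\sigma\rangle}(Y_1,Y_2,Y_3)$, and $k(w_2)^{\langle\sigma\rangle}$ is $k$-rational by L\"uroth (indeed it equals $k(w_2-w_2^{-1})$), with no Brauer obstruction to analyze. In other words, the variable $w_2$ that you try to neutralize by passing to $k(\sqrt{-1})$ is precisely the asset that makes the last step trivial over $k$. If you insist on your route, you must actually produce and simplify the resulting quadrics for $n=3$ and $n=4$; as the proposal stands, the theorem is not proved.
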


\begin{proof}
Case 1. $n=3$.

By Step 2 in the proof of Theorem \ref{t5.2} (note that the
assumption $\sqrt{-1}\in k$ is used only till Step 4 there), it
suffices to consider $k(U_i, V_i : 1 \le i \le 3)(w_2)^{S_3}$
where $\sum_{1\le i\le 3} U_i=\sum_{1\le i\le 3} V_i=0$. Define
$\tau =(1, 2, 3) \in S_3$. The actions are given as below,
\begin{align*}
\tau:{}& U_1\mapsto U_2\mapsto -U_1-U_2,~ V_2\mapsto V_1 \mapsto -V_1-V_2, \\
\sigma:{}& U_1\leftrightarrow V_1,~ U_2\leftrightarrow V_2.
\end{align*}

Define $w_3=U_1/V_2$, $w_4=U_2/V_1$, $w_5=V_1/V_2$. It follows
that $k(U_i,V_i:1\le i\le 3)(w_2)^{S_3}=k(w_j: 2\le j\le
5)(V_1)^{S_3}=k(w_j:2\le j\le 5)^{S_3}(w_0)$ for some $w_0$ by
Theorem \ref{t2.1}.

It remains to show that $k(w_j: 2\le j\le 5)^{S_3}$ is $k$-rational.
Note that
\begin{align*}
\tau:{}& w_2\mapsto w_2,~ w_3\mapsto w_4\mapsto (w_3+w_4w_5)/(1+w_5). \\
\sigma:{}& w_2\mapsto -1/w_2,~ w_3\mapsto 1/w_4,~w_4\mapsto 1/w_3,~ w_5\mapsto w_3/(w_4w_5).
\end{align*}

Define $w_6=(w_3+w_4w_5)/(1+w_5)$.
Note that $k(w_3,w_4,w_5)=k(w_3,w_4,w_6)$ and
\begin{align*}
\tau:{}& w_3\mapsto w_4\mapsto w_6 \mapsto w_3, \\
\sigma:{}& w_6 \mapsto 1/w_6.
\end{align*}

Define $w_7=(1-w_3)/(1+w_3)$, $w_8=(1-w_4)/(1+w_4)$, $w_9=(1-w_6)/(1+w_6)$.
Then $k(w_3,w_4,w_6)=k(w_7,w_8,w_9)$ and $\tau: w_7\mapsto w_8\mapsto w_9\mapsto w_7$,
$\sigma:w_7\mapsto -w_8$, $w_8\mapsto -w_7$, $w_9\mapsto -w_9$.

By Theorem \ref{t2.4} we find that $k(w_2,w_3,w_4,w_5)^{\langle\tau\rangle}=k(w_2,X_1,X_2,X_3)$ where
$X_1=w_7+w_8+w_9$ and
\begin{align*}
X_2 &= \frac{w_7^2w_8+w_8^2w_9+w_9^2w_7-3w_7w_8w_9}{w_7^2+w_8^2+w_9^2-w_7w_8-w_7w_9-w_8w_9}, \\
X_3 &= \frac{w_7w_8^2+w_8w_9^2+w_9w_7^2-3w_7w_8w_9}{w_7^2+w_8^2+w_9^2-w_7w_8-w_7w_9-w_8w_9}.
\end{align*}

Moreover, the action of $\sigma$ is given by
\[
\sigma: w_2\mapsto -1/w_2,~X_1\mapsto -X_1,~ X_2\mapsto -X_3,~X_3\mapsto -X_2.
\]

Apply Theorem \ref{t2.2}.
We find that $k(w_2,X_1,X_2,X_3)^{\langle\sigma\rangle}=k(w_2)^{\langle\sigma\rangle}(Y_1,Y_2,Y_3)$
for some $Y_1$, $Y_2$, $Y_3$ fixed by $\sigma$.
Since $k(w_2)^{\langle\sigma\rangle}$ is $k$-rational,
it follows that $k(w_2,X_1,\break X_2,X_3)^{\langle\sigma\rangle}$ is $k$-rational.

\bigskip
Case 2. $n=4$.

Once again we use Step 2 in the proof of Theorem \ref{t5.2}. It
suffices to consider $k(U_i, V_i : 1 \le i \le 4)(w_2)^{S_4}$
where $\sum_{1\le i\le 4} U_i=\sum_{1\le i\le 4} V_i=0$. Denote by
$\lambda_1=(1,2)(3,4),\lambda_2=(1,3)(2,4), \rho=(1,2,3)$ and
$\sigma=(1,2)$ as before. Then $S_4$ is generated by $\lambda_1,
\lambda_2, \rho, \sigma$.

Define $t_1=U_1+U_2, t_2=V_1+V_2, t_3=U_1+U_3,
t_4=V_2+V_3,t_5=U_2+U_3,t_6=V_1+V_3$. The action of $S_4$ is given
as follows,
\begin{align*}
\lambda_1:{}& t_1\mapsto t_1,~t_2\mapsto t_2,~ t_3\mapsto -t_3,
~t_4 \mapsto -t_4,~ t_5\mapsto -t_5,~t_6 \mapsto -t_6,\\
\lambda_2:{}& t_1\mapsto -t_1,~t_2\mapsto -t_2,~ t_3\mapsto t_3,
~t_4 \mapsto t_4,~ t_5\mapsto -t_5,~t_6 \mapsto -t_6, \\
\rho:{}& t_1\mapsto t_5\mapsto t_3\mapsto t_1,
~t_2 \mapsto t_6\mapsto t_4 \mapsto t_2, \\
\sigma:{}& t_1\leftrightarrow t_2,~ t_3\leftrightarrow t_6,~
t_4\leftrightarrow t_5.
\end{align*}

It follows that $k(t_i: 1 \le i \le
6)(w_2)^{<\lambda_1,\lambda_2>}=k(T_i: 1 \le i \le 6)(w_2)$ where
$T_1=t_1/t_2, T_2=t_3/t_4, T_3=t_5/t_6,
T_4=t_2t_6/t_4,T_5=t_4t_6/t_2,T_6=t_2t_4/t_6$. Moreover, the
actions of $\rho$ and $\sigma$ are given as,
\begin{align*}
\rho:{}& T_1\mapsto T_3\mapsto T_2\mapsto T_1,
~T_4 \mapsto T_5\mapsto T_6 \mapsto T_4, \\
\sigma:{}& T_1\mapsto 1/T_1, ~T_2\mapsto 1/T_3,~T_3\mapsto 1/T_2,
\\&T_4 \mapsto (T_1T_2/T_3)T_6,~ T_5\mapsto (T_2T_3/T_1)T_5, ~T_6
\mapsto (T_1T_3/T_2)T_4.
\end{align*}

By Theorem \ref{t2.2}, it suffices to show that $k(T_i: 1 \le i
\le 3)(w_2)^{<\rho,\sigma>}$ is $k$-rational.

Define
$w_3=(1-T_1)/(1+T_1),w_4=(1-T_2)/(1+T_2),w_5=(1-T_3)/(1+T_3)$.
Then we find
\begin{align*}
\rho:{}& w_2\mapsto w_2,
~w_3 \mapsto w_5\mapsto w_4 \mapsto w_3, \\
\sigma:{}& w_2\mapsto -1/w_2, ~w_3 \mapsto -w_3, ~w_4 \mapsto
-w_5,~w_5 \mapsto -w_4.
\end{align*}

Use Theorem \ref{t2.4} to find $k(T_i: 1 \le i \le
3)(w_2)^{<\rho>}$. The remaining part of the proof is very similar
to the last part of Case 1. The details are omitted.

\bigskip
Case 3. $n=5$.

By \cite[Theorem 11]{Pl2}, $k(G_5)$ is rational over $k(G_4)$.
Since $k(G_4)$ is $k$-rational by Case 2, we are done.

\end{proof}

\newpage
\renewcommand{\refname}{\centering{References}}


\begin{thebibliography}{GMS} \itemsep=2pt


\bibitem[AHK]{AHK}
H. Ahmad, M. Hajja and M. Kang,
\textit{Rationality of some projective linear actions},
J. Algebra 228 (2000), 643--658.

\bibitem[GMS]{GMS}
S. Garibaldi, A. Merkurjev and J-P. Serre, \textit{Cohomological
invariants in Galois cohomology}, AMS Univ. Lecture Series vol.
28, American Math. Society, Providence, 2003.

\bibitem[HH]{HH}
P. N. Hoffman and J. F. Humphreys, \textit{Projective
representations of the symmetric groups}, The Clarendon Press,
Oxford, 1992.


\bibitem[HK]{HK}
M. Hajja and M. Kang,
\textit{Some actions of symmetric groups},
J. Algebra 177 (1995), 511--535.

\bibitem[HoK]{HoK}
A. Hoshi and M. Kang,
\textit{Twisted symmetric group actions},
to appear in ``Pacific J. Math.".

\bibitem[Ka1]{Ka1}
M. Kang,
\textit{Noether's problem for dihedral 2-groups II},
Pacific J. Math. 222 (2005), 301--316.

\bibitem[Ka2]{Ka2}
M. Kang,
\textit{Retract rational fields},
arXiv: 0911.2521.

\bibitem[Kar]{Kar}
G. Karpilovsky,
\textit{Projective representations of finite
groups}, Marcel Dekker, New York, 1985.


\bibitem[KP]{KP}
M. Kang and B. Plans,
\textit{Reduction theorems for Noether's problem},
Proc. Amer. Math. Soc. 137 (2009), 1867--1874.

\bibitem[KZ]{KZ}
M. Kang and J. Zhou, \textit{The rationality problem for finite
subgroups of $GL_4(\bm{Q})$}, to appear in arXiv.


\bibitem[Ma]{Ma}
K. Masuda,
\textit{On a problem of Chevalley},
Nagoya Math. J. 8 (1955), 59--63.

\bibitem[Pl1]{Pl1}
B. Plans,
\textit{Noether's problem for $GL(2,3)$},
Manuscripta Math. 124 (2007), 481--487.

\bibitem[Pl2]{Pl2}
B. Plans,
\textit{On Noether's problem for central extensions of symmetric and alternating groups},
J. Algebra 321 (2009), 3704--3713.

\bibitem[Sa]{Sa}
D. J. Saltman,
\textit{Retract rational fields and cyclic Galois extensions},
Israel J. Math. 47 (1984), 165--215.

\bibitem[Se]{Se}
J-P. Serre, \textit{L'invariant de Witt de forme Tr $(x^2)$},
Comment. Math. Helv. 59 (1984) 651-676.

\bibitem[Sp]{Sp}
T. A. Springer, \textit{Invariant theory}, LNM vol. 585,
Springer-Verlag, Berlin, 1977.

\bibitem[Sw]{Sw}
R. G. Swan,
\textit{Noether's problem in Galois theory},
in ``Emmy Noether in Bryn Mawr", edited by J. Sally and B. Srinivasan,
Springer-Verlag, Berlin, 1983.

\bibitem[Vo]{Vo}
V. E. Voskresenskii,
\textit{Algebraic groups and their birational invariants},
Amer. Math. Soc., Providence, 1998.

\bibitem[We]{We}
A. Weil, \textit{The field of definition of a variety}, Amer. J.
Math. 56 (1956), 509--524.

\bibitem[Ya]{Ya}
A. Yamasaki,
\textit{Negative solutions to three-dimensional monomial Noether problem},
arXiv: 0909.0586v1.


\end{thebibliography}
\end{document}